\documentclass[12pt]{amsart}
\usepackage{amssymb}
\textwidth 15.1cm \textheight 21.08cm \topmargin 0.0cm
\oddsidemargin 0.0cm \evensidemargin 0.0cm
\parskip -0.0cm

\def\erf{\hbox{  erf}}

\newtheorem{defin}{Definition}
\newtheorem{propo}{ Proposition}
\newtheorem{lemme}{Lemma}

\newtheorem{theorem}{Theorem}
\newtheorem{remark}{Remark}
\newtheorem{exe}{Example}

\begin{document}

\title [Dunkl completely monotonic functions]{Dunkl completely monotonic functions \\}%

\author[ J. El Kamel, K. Mehrez]{ Jamel El Kamel \qquad and \qquad Khaled Mehrez }
 \address{Jamel El Kamel. D\'epartement de Math\'ematiques fsm. Monastir 5000, Tunisia.}
 \email{jamel.elkamel@fsm.rnu.tn}
 \address{Khaled Mehrez. D\'epartement de Math\'ematiques IPEIM. Monastir 5000, Tunisia.}
 \email{k.mehrez@yahoo.fr}
\begin{abstract}
We introduce the notion of Dunkl completely monotonic functions on $\left(-\sigma,\sigma\right), \sigma>0$. We establish a restrictive version of the analogue of Schoenberg's theorem in Dunkl setting.
\end{abstract}
\maketitle
\noindent{\it keywords:} Dunkl operators,  Dunkl translation, positive definite functions,  Dunkl positive definite functions, Dunkl completely monotonic functions, The Kummer confluent hypergeometric functions .\\

\noindent MSC (2010) 26A48, 42A82, 33C52, 42A38\\

\section{Introduction}
\noindent Positive  definite and completely monotonic functions play an important role in harmonic
analysis, for examples, in theory of scattered data interpolation,
probability theory, potential theory. The most important
facts about positive definite functions are the connection between 
positive definite and completely monotonic functions.\\
In classical analysis a complex valued continuous function $f$ is said positive definite (resp. strictly positive definite) on $\mathbb{R}$, if for every distinct real numbers $x_{1},x_{2},...,x_{n}$ and every complex numbers $z_{1},z_{2},...,z_{n}$ not all zero, the inequality 
\[\sum_{j=1}^{n}\sum_{k=1}^{n}z_{j}\overline{z_{k}}f(x_{j}-x_{k})\geq0\,\,(resp.\,>0)\]
hold true.(see \rm{[10]}).\\
In 1930, the class of positive definite functions is fully characterized by Bochner's theorem \rm{[1]}, the function $f$ being positive definite if and only if it is the Fourier transform of a positive finite Borel measure $\mu$ on the real line $\mathbb{R}:$
$$f(x)=\int_{\mathbb{R}}e^{-itx}d\mu(t).$$\\
In \rm[7],  we have introduced the notion of Dunkl positive definite and strictly Dunkl positive definite functions on $\mathbb{R}^{d}$. We have established the analogue of Bochner's theorem in Dunkl setting.\\
A continuous function $f$ on $(a,b)$ is called completely monotonic on $(a,b)$,  if it  satisfies $f\in C^{\infty}(]a,b[)$ and 
$$(-1)^{n}f^{(n)}(x)\geq 0,$$
for all $n=0,1,2,...$ and $a<x<b$ (see \rm{[21]}).\\
Bernstein's Theorem \rm{[21, p. 161]}, states that a function $f:[0,\infty[\longrightarrow\mathbb{R}$ is completely monotonic
on $[0,\infty[$, if and only if 
$$f(x)=\int_{0}^{\infty}e^{-tx}d\mu(t)$$
where $\mu$ is a nonnegative finite Borel measure on $[0,\infty[.$\\
In 1938, Schoenberg's theorem \rm{[14]}, asserts that a function $\varphi$ is completely monotonic  on $[0,\infty[$ if and only if $\Phi:=\varphi(\parallel.\parallel^{2})$ is positive definite on every $\mathbb{R}^{d}.$\\
In this work, we introduce the notion of Dunkl completely monotonic functions on $\left(-\sigma,\sigma\right), \sigma>0.$ We establish the analogue of Schoenberg's theorem in Dunkl setting. As application we study the Dunkl complete monotonicity of the Kummer confluent hypergeometric functions.\\

\noindent Our paper is organized as follows: In section 2, we present some preliminaries results and notations that will be useful in the sequal. In section 3, we give some properties of the Dunkl kernel, the Dunkl transform and the Dunkl translation. In section 4, we recall some results about Dunkl positive definite functions proved by the authors in \rm{[7]}. In section 5, we introduce the notion of Dunkl completely monotonic functions in studying their properties, some examples are given. We state a  restrictive version of Schoenberg's theorem in Dunkl setting. As application, we study the Dunkl completely monotonicity of a class of functions related to the Kummer confluent hypergeometric functions .\\

\noindent Let us recall some classical functional spaces : 
\begin{itemize} 
\item $C(\mathbb{R}^{d})$ the set of continuous functions on $\mathbb{R}^{d},C_{0}(\mathbb{R}^{d})$ its subspace of continuous functions on $\mathbb{R}^{d}$ vanishing at infinity and $C^{\infty}(\mathbb{R}^{d})$ its subspace of infinitely differentiable functions.
\item $\mathcal{S}(\mathbb{R}^{d})$ the Schwartz space.
\item $L^{p}\left(\mathbb{R}^{d},h_{\kappa}^{2}\right),\;1\leq p<\infty$, the space of measurable functions on $\mathbb{R}^{d}$ such that 
\[\parallel f\parallel_{\kappa,p}=\left(\int_{\mathbb{R}^{d}}|f(x)|^{p}h_{\kappa}^{2}(x)dx\right)^{\frac{1}{p}}<\infty.\]
\item Let $\sigma>0, M_{\sigma}(\mathbb{R})$ denotes the space of nonnegative finite Borel measures on $\mathbb{R}$ satisfying
$$\int_{0}^{\infty}e^{\sigma\mid x\mid}d\mu(x)<\infty,$$
and $$\displaystyle  M_{+\infty}\left(\mathbb{R}\right)=\cap_{\sigma>0} M_{\sigma}\left(\mathbb{R}\right).$$
\end{itemize} 
\section{Notations and preliminaries}
Let $R$ be a fixed root system in $\mathbb{R}^{d}$, $G$ the associated finite reflexion group, and $R_{+}$ a fixed positive subsystem of $R,$ normalized so that $<\alpha,\alpha>=2$ for all $\alpha\in R_{+}$, where $<x,y>$ denotes the usual Euclidean inner product.\\
For a non zero $\alpha\in \mathbb{R}^{d}$, let use define the reflexion $\sigma_{\alpha}$ by 
\[\sigma_{\alpha}x=x-2\frac{<x,\alpha>}{<\alpha,\alpha>}\alpha,\,\, x\in\mathbb{R}^{d}.\]
Let $k$ be a nonnegative multiplicity function $\alpha\longmapsto k_{\alpha}$ defined on $R_{+}$ with the property that $k_{\alpha}=k_{\beta}$ where $\sigma_{\alpha}$ is conjugate to $\sigma_{\beta}$ in $G$. The weight function $h_{k}$ is defined by 
\begin{equation}
h_{k}(x)=\prod_{\alpha\in R_{+}}|<x,\alpha>|^{k_{\alpha}},\,\, x\in\mathbb{R}^{d}.
\end{equation}
This is a nonnegative homogeneous function of degre $\displaystyle{\gamma_{k}=\sum_{\alpha\in R_{+}}k_{\alpha}}$, which is invariant under the reflexion group $G.$\\
Let $T_{i}$ denote Dunkl's differential-difference operator defined in \rm{[4]} by 
\begin{equation}
T_{i}f(x)=\partial_{i}f(x)+\sum_{\alpha\in R_{+}}\kappa_{\alpha}\frac{f(x)-f(\sigma_{\alpha}x)}{<\alpha,x>}<\alpha,e_{i}>,\,\,1\leq i\leq d,
\end{equation}
where $\partial_{i}$ is the ordinary partial derivative with respect to $x_{i}$, and $e_{1},e_{2},...,e_{d}$ are the standard unit vectors of $\mathbb{R}^{d}.$\\
The rank-one cas: in cas $d=1$, the only choise of $R$ is $R=\{\pm\sqrt{2}\}$. The corresponding reflexion group is $G=\left\{ id, \sigma \right\}$ action on $\mathbb{R}$ by $\sigma(x)=-x.$ The Dunkl operator $T:=T_{k}$ associated with the multiplicity parameter $k\in\mathbb{C}$ is given by 
$$\displaystyle
T_{k}f(x)=f^{'}(x)+k\frac{f(x)-f(-x)}{x}.$$
Let $\mathcal{P}_{n}^{d}$ denote the space  of homogeneous polynomials of degree $n$ in $d-$variables. The operators $T_{i},\,1\leq i\leq d$ map  $\mathcal{P}_{n}^{d}$ to $\mathcal{P}_{n-1}^{d}.$\\
 The intertwining operator $V_{\kappa}$ is linear operator and determined uniquely as 
\begin{equation}
V_{\kappa}\mathcal{P}_{n}^{d}\subset\mathcal{P}_{n}^{d},\,\,V_{\kappa}1=1,\,\,\mathcal{T}_{i}V_{\kappa}=V_{\kappa}\partial_{i},\,1\leq i\leq d.
\end{equation}
According to R$\rm{\ddot{o}}$sler \rm{[13]}, $V_{k}$ is a positive operator. De Jeu \rm{[2]}, prouve that $V_{k}$ is an isomorphism of $C^{\infty}(\mathbb{R}^{d})$ whose inverse is denoted by $W_{k}$ and admit the following integral representation, 
\begin{theorem}\label{t1} For $f\in C(\mathbb{R}^{d}),$ we have 
\[
V_{k}f(x)=\int_{\mathbb{R}^{d}}f(y)d\mu_{x}(y),\;\;x\in\mathbb{R}^{d},\]
where $\mu_{x}$ is a probability measure on $\mathbb{R}^{d}$ which the carrier is in the closed ball $\overline{B(0,\parallel x\parallel)}.$
\end{theorem}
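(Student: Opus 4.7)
The plan is to construct the measure $\mu_x$ pointwise via the Riesz--Markov--Kakutani representation theorem applied to an appropriate positive linear functional. Fix $x\in\R^d$ and consider the linear functional $\Lambda_x:p\mapsto V_k p(x)$ on the polynomial algebra $\mathcal{P}^d=\bigoplus_{n\geq 0}\mathcal{P}_n^d$. From R\"osler's positivity of $V_k$ (cited in the paragraph preceding the statement) together with the normalisation $V_k 1=1$ contained in (3), the functional $\Lambda_x$ is positive and satisfies $\Lambda_x(1)=1$.

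The heart of the matter is the support estimate
$$|\Lambda_x(p)|\leq\sup_{y\in\overline{B(0,\|x\|)}}|p(y)|,\qquad p\in\mathcal{P}^d,$$
which gives that $\Lambda_x$ extends to a continuous linear functional of norm $1$ on $C(\overline{B(0,\|x\|)})$, polynomials being dense there by Stone--Weierstrass. This is precisely the content of R\"osler's theorem \rm{[13]}; her argument proceeds through the Dunkl heat semigroup on $\R^d$ and exploits the fact that the associated Markov-type kernels have a finite-propagation speed that, in the limit, localises the representing measures of $V_k$ inside the closed ball $\overline{B(0,\|x\|)}$. Purely polynomial devices (say, Cauchy--Schwarz $|\Lambda_x(p)|^2\leq\Lambda_x(p^2)$ combined with the degree-preservation $V_k\mathcal{P}_n^d\subset\mathcal{P}_n^d$) do not close the loop, because positivity of $V_k$ is stated only for polynomials that are nonnegative on the whole of $\R^d$, not just on the ball.

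Once the norm estimate has been secured, the Riesz--Markov--Kakutani theorem provides a unique Borel probability measure $\mu_x$ supported in $\overline{B(0,\|x\|)}$ with $V_k p(x)=\int p\,d\mu_x$ for every $p\in\mathcal{P}^d$. The formula is extended to an arbitrary $f\in C(\R^d)$ by uniformly approximating $f$ on the compact set $\overline{B(0,\|x\|)}$ by polynomials and invoking the continuity of $\int(\cdot)\,d\mu_x$ on $C(\overline{B(0,\|x\|)})$; this simultaneously gives the extension of $V_k$ from $C^\infty(\R^d)$ to $C(\R^d)$ announced in the statement. The decisive obstacle is, beyond any doubt, the support estimate in the middle step: it is not a soft consequence of positivity alone and genuinely requires the extra semigroup/probabilistic input of \rm{[13]}.
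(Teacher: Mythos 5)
The paper offers no proof of this statement at all: it is quoted from R\"osler [13] (together with De Jeu [2]), introduced by the words ``admit the following integral representation'', so there is no in-paper argument to measure your attempt against. Your outline does reproduce the architecture of R\"osler's actual proof correctly --- the positive, unital functional $\Lambda_x(p)=V_kp(x)$ on polynomials, a sup-norm estimate over $\overline{B(0,\parallel x\parallel)}$, the Riesz--Markov representation on that ball, and extension to all of $C(\R^d)$ by Weierstrass density --- and you are right that the irreducible hard input is the positivity theorem of [13], which is established there by a semigroup/maximum-principle argument for the Dunkl Laplacian rather than by algebraic manipulation. Since you explicitly defer that step to [13], what you have written is a citation plus a sketch rather than a self-contained proof; but that is precisely the status the statement has in the paper, so there is no gap relative to what the authors themselves provide.

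One side remark in your write-up is inaccurate, however. You assert that ``purely polynomial devices do not close the loop'' for the support estimate because positivity of $V_k$ applies only to polynomials nonnegative on all of $\R^d$. In R\"osler's argument the localization to the ball \emph{is} obtained by polynomial devices once global positivity is known: one decomposes $p=\sum_n p_n$ into homogeneous components, applies positivity of $V_k$ to the globally nonnegative polynomial $\parallel p_n\parallel_{\infty,S^{d-1}}^2\parallel y\parallel^{2n}-p_n(y)^2$, controls $V_k\left(\parallel\cdot\parallel^{2n}\right)(x)$ by a constant times $\parallel x\parallel^{2n}$, and concludes with the Cauchy--Schwarz inequality $|\Lambda_x(p_n)|^2\leq\Lambda_x(p_n^2)\Lambda_x(1)$ valid for any positive functional. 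So the semigroup input is needed only to prove positivity itself, not additionally for a ``finite propagation'' or localization step as your proposal suggests.
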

The Dunkl kernel associated with $G$ and $k$ is defined by \rm{[4]}: for $y\in\mathbb{C}^{n}$
$$E_{k}(x,y)=V_{k}\left(e^{<.,y>}\right)(x),\,\, x\in\mathbb{R}^{d}.$$
$$E_{k}(x,iy)=V_{k}\left(e^{<.,iy>}\right)(x),\,\, x,y\in\mathbb{R}^{d}.$$
plays the role of $e^{i<x,y>}$ in the ordinary Fourier analysis.\\
In the rank-one case: for the group $G=\mathbb{Z}_{2},\, Re(k)>0$ we have
$$\displaystyle
V_{k}f(x)=\frac{\Gamma(k+\frac{1}{2})}{\Gamma(\frac{1}{2})\Gamma(k)}\int_{-1}^{1}f(xt)(1-t)^{k-1}(1+t)^{k}dt.$$
In particular, for $x,y\in\mathbb{C}, Re(k)>0$ 
$$\displaystyle
E_{k}(x,y)=\frac{\Gamma(k+\frac{1}{2})}{\Gamma(\frac{1}{2})\Gamma(k)}\int_{-1}^{1}e^{xt}(1-t)^{k-1}(1+t)^{k}dt.$$

\begin{equation}\label{ja} 
E_{k}(x,y)=j_{k-\frac{1}{2}}(ixy)+\frac{xy}{(2k+1)}j_{k+\frac{1}{2}}(ixy)
\end{equation}
where for $\alpha\geq\frac{-1}{2}, j_{\alpha}$ is the normalized Bessel function.
\begin{propo}\rm{(see [11])}. 
Let $k\geq 0$ and $y\in\mathbb{C}^{d}$. Then the function $f=E_{\kappa}(.,y)$
is the unique solution of the system 
\begin{equation}
T_{i}f=<e_{i},y>f,\, \textrm{for\, all}\,\,1\leq i\leq d,
\end{equation}
which is real-analytic on $\mathbb{R}^{d}$ and satisfies $f(0)=1.$
\end{propo}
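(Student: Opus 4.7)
The plan is to handle existence and uniqueness separately, using the characterization of $V_k$ via the intertwining identity $T_i V_k=V_k\partial_i$ and the fact (de Jeu) that $V_k$ is an isomorphism of $C^\infty(\mathbb R^d)$ with inverse $W_k$.

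For existence, I would take $f(x)=E_k(x,y)=V_k(e^{\langle\cdot,y\rangle})(x)$ and apply (3) term by term: since $\partial_i e^{\langle\cdot,y\rangle}=\langle e_i,y\rangle e^{\langle\cdot,y\rangle}$, the intertwining relation gives
\[ T_i f(x) = T_i V_k(e^{\langle\cdot,y\rangle})(x) = V_k(\partial_i e^{\langle\cdot,y\rangle})(x) = \langle e_i,y\rangle V_k(e^{\langle\cdot,y\rangle})(x) = \langle e_i,y\rangle f(x).\]
For $f(0)=1$ I would expand $e^{\langle x,y\rangle}=\sum_{n\geq 0}\frac{\langle x,y\rangle^n}{n!}$; since $V_k$ maps $\mathcal P_n^d$ into itself and $V_k1=1$, applying $V_k$ term by term shows $E_k(x,y)=1+\sum_{n\geq 1}V_k(\langle\cdot,y\rangle^n/n!)(x)$ with each summand vanishing at $0$, and the same expansion makes real-analyticity on $\mathbb R^d$ transparent via Theorem~\ref{t1} (the probability-measure representation gives uniform convergence on compacts).

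For uniqueness, suppose $g$ is real-analytic on $\mathbb R^d$ with $T_i g=\langle e_i,y\rangle g$ for every $i$ and $g(0)=1$. Set $h:=W_k g\in C^\infty(\mathbb R^d)$. Applying $W_k$ to the eigen-equation and using $W_k T_i=\partial_i W_k$ (the inverse of (3)), one gets
\[ \partial_i h(x)=W_k(T_i g)(x)=\langle e_i,y\rangle W_k g(x)=\langle e_i,y\rangle h(x),\qquad 1\leq i\leq d.\]
Because $V_k$ preserves the homogeneous decomposition with $V_k1=1$, the constant term is fixed by $W_k$, whence $h(0)=g(0)=1$. The system $\partial_i h=\langle e_i,y\rangle h$ with $h(0)=1$ admits a unique classical solution $h(x)=e^{\langle x,y\rangle}$ (solve each $\partial_i$ equation as a one-variable ODE, or directly compare Taylor coefficients). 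Therefore $g=V_k h=V_k(e^{\langle\cdot,y\rangle})=E_k(\cdot,y)$, which is the desired uniqueness.

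The only delicate point is the uniqueness step: one needs de Jeu's result that $V_k$ is a bijection on $C^\infty(\mathbb R^d)$ and that $W_k$ intertwines the $T_i$ with the $\partial_i$. If one preferred to avoid invoking de Jeu, the alternative is to expand $g=\sum_n g_n$ in homogeneous components, match degrees in $T_i g=\langle e_i,y\rangle g$ to obtain the recursion $T_i g_{n+1}=\langle e_i,y\rangle g_n$ with $g_0=1$, and argue inductively that this recursion determines $g_{n+1}\in\mathcal P_{n+1}^d$ uniquely; however, checking that $\bigcap_i\ker T_i|_{\mathcal P_{n+1}^d}=\{0\}$ for $n\geq 0$ is essentially equivalent to the injectivity of $V_k$ on polynomials, so the $W_k$-route above is cleaner.
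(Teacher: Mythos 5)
The paper does not actually prove this proposition; it is quoted verbatim from R\"osler's survey [11], so there is no in-paper argument to compare against. Judged on its own terms, your proof is essentially the standard one and is correct in outline: existence via $T_iV_k=V_k\partial_i$ applied to $e^{\langle\cdot,y\rangle}$, and uniqueness by conjugating the eigenvalue system through $W_k$ down to the classical system $\partial_i h=\langle e_i,y\rangle h$, whose solution is $c\,e^{\langle x,y\rangle}$. Two steps deserve more care. First, the intertwining relation (3) is stated on polynomials, so applying it to the exponential requires a limiting argument; your appeal to Theorem~\ref{t1} (the probability-measure representation bounds $V_k$ on balls, giving uniform convergence of $\sum_n V_k(\langle\cdot,y\rangle^n/n!)$ on compacts) does close this, but it should be said explicitly that this is what licenses the term-by-term exchange, and one must also know that $T_i$ can be applied term by term to the resulting series. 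Second, your justification that $h(0)=g(0)$ --- ``the constant term is fixed by $W_k$'' --- is informal for a non-polynomial $g$, since the degree-by-degree action of $W_k$ is only defined on polynomials and extending it to Taylor expansions of general smooth functions is exactly the kind of thing that needs proof. You can sidestep this entirely: the classical system forces $h=c\,e^{\langle\cdot,y\rangle}$ for some constant $c$, hence $g=V_kh=c\,E_k(\cdot,y)$, and evaluating at $0$ using $E_k(0,y)=1$ (already established in the existence half) gives $c=1$. With those repairs the argument is complete; your remark that the alternative homogeneous-recursion route hinges on $\bigcap_i\ker T_i|_{\mathcal P_{n}^d}=\{0\}$ for $n\geq1$ (Dunkl's theorem for $k\geq0$) is also accurate and is in fact the route taken in much of the literature.
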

\begin{propo}\rm{(see [6,11])}. 
For $x,y\,\in\mathbb{C}^{d},\,\lambda\in\mathbb{C}$
\begin{enumerate}
\item $E_{\kappa}\left(x,\, y\right)=E_{\kappa}\left(y,\, x\right),$
\item $E_{\kappa}\left(\lambda x,\, y\right)=E_{\kappa}\left(x,\, \lambda y\right)$
\item $\overline{E_{\kappa}\left(x,\, y\right)}=E_{\kappa}\left(\overline{x},\,\overline{y}\right)$
\item $|E_{\kappa}(-ix,y)|\leq1$.
\item $\mid E_{\kappa}\left(x,\, y\right)\mid\leq e^{\parallel x\parallel.\parallel y\parallel},$ 
\end{enumerate}
\end{propo}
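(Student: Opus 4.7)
The plan is to exploit two complementary representations of the Dunkl kernel: the polynomial series expansion
$$E_{\kappa}(x,y)=\sum_{n=0}^{\infty}\frac{1}{n!}\,V_{\kappa}\bigl(\langle\cdot,y\rangle^{n}\bigr)(x),$$
obtained by applying $V_{\kappa}$ term-by-term to the exponential series of $e^{\langle\cdot,y\rangle}$ (this is legitimate because $V_{\kappa}$ preserves the polynomial degree), together with the integral representation $V_{\kappa}f(x)=\int_{\mathbb{R}^{d}}f(y)\,d\mu_{x}(y)$ supplied by Theorem \ref{t1}, in which $\mu_{x}$ is a probability measure supported in $\overline{B(0,\|x\|)}$.

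Assertions (1)--(3) I would derive from the series. Writing $E_{n}(x,y):=\tfrac{1}{n!}V_{\kappa}(\langle\cdot,y\rangle^{n})(x)$, each $E_{n}$ is a polynomial that is homogeneous of degree $n$ separately in $x$ and in $y$: homogeneity in $y$ is immediate from $\langle\cdot,\lambda y\rangle^{n}=\lambda^{n}\langle\cdot,y\rangle^{n}$, while homogeneity in $x$ follows from the property $V_{\kappa}\mathcal{P}_{n}^{d}\subset\mathcal{P}_{n}^{d}$ in (3) of the text. Hence $E_{n}(\lambda x,y)=\lambda^{n}E_{n}(x,y)=E_{n}(x,\lambda y)$ and summing over $n$ gives (2). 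Because the multiplicity $k$ is real-valued, $V_{\kappa}$ has a real matrix on each $\mathcal{P}_{n}^{d}$, so $\overline{E_{n}(x,y)}=E_{n}(\bar{x},\bar{y})$ and (3) follows by summation. The symmetry (1) reduces to $E_{n}(x,y)=E_{n}(y,x)$ for every $n$; this is the reproducing-kernel identity with respect to the Dunkl bilinear form $[p,q]_{\kappa}=(p(T)q)(0)$ on $\mathcal{P}_{n}^{d}$, and I would cite it from the Dunkl literature rather than rederive it.

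For (4) and (5) I pivot to the integral representation. Combining (1) with Theorem \ref{t1} gives $E_{\kappa}(-ix,y)=E_{\kappa}(y,-ix)=\int_{\mathbb{R}^{d}}e^{-i\langle t,x\rangle}\,d\mu_{y}(t)$; for real $x$ the integrand has modulus one and $\mu_{y}$ is a probability measure, which yields (4). For (5) the same formula yields $E_{\kappa}(x,y)=\int_{\mathbb{R}^{d}}e^{\langle t,y\rangle}\,d\mu_{x}(t)$, and since $\mu_{x}$ is carried in $\overline{B(0,\|x\|)}$ the Cauchy--Schwarz estimate $|\langle t,y\rangle|\leq\|t\|\cdot\|y\|$ inside the exponent gives $|E_{\kappa}(x,y)|\leq e^{\|x\|\cdot\|y\|}$.

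The main obstacle I foresee is the symmetry (1): the series reduces it to symmetry of the polynomial kernels $E_{n}$, but a self-contained proof requires setting up the Dunkl pairing and the action of $T_{i}$ on polynomials, which is standard but not a one-line argument. A secondary subtlety is that pushing the bound (5) to genuinely complex $x\in\mathbb{C}^{d}$ demands an analytic-continuation argument for the integral representation of $V_{\kappa}$ off $\mathbb{R}^{d}$, using the exponential growth estimate itself to justify convergence; this is routine but deserves explicit mention.
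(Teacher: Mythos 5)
This proposition is stated in the paper with no proof at all --- it is imported verbatim from Dunkl and R\"osler via the citation ``[6,11]'' --- so there is no in-paper argument to compare yours against; what you have written is essentially the standard proof from that literature. Your division of labour is the right one: the termwise series $E_{\kappa}(x,y)=\sum_{n}\frac{1}{n!}V_{\kappa}(\langle\cdot,y\rangle^{n})(x)$ handles the algebraic identities (2) and (3) cleanly (homogeneity of $V_{\kappa}(\langle\cdot,y\rangle^{n})$ in $x$ from $V_{\kappa}\mathcal{P}_{n}^{d}\subset\mathcal{P}_{n}^{d}$, reality of $V_{\kappa}$ on real polynomials from the reality of $k$), and R\"osler's probability-measure representation of $V_{\kappa}$ (Theorem \ref{t1}) gives (4) and (5) in two lines. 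You are also right that (1) is the one genuinely deep item --- it is Dunkl's theorem, proved via the pairing $[p,q]_{\kappa}=(p(T)q)(0)$ --- and citing it is exactly what the paper itself does for the whole proposition.

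Two small points of hygiene. First, the blanket hypothesis ``$x,y\in\mathbb{C}^{d}$'' in the statement is too generous for item (4): $|E_{\kappa}(-ix,y)|\leq 1$ requires $x,y\in\mathbb{R}^{d}$ (your own proof uses that $e^{-i\langle t,x\rangle}$ has modulus one and that $\mu_{y}$ exists, both of which need real arguments), so you should state that restriction rather than inherit the paper's imprecision. Second, as you note, Theorem \ref{t1} only furnishes $\mu_{x}$ for $x\in\mathbb{R}^{d}$, so the integral argument proves (5) for real $x$ and complex $y$; extending to both arguments complex requires either de Jeu's estimate $|V_{\kappa}(\langle\cdot,y\rangle^{n})(x)|\leq\|x\|^{n}\|y\|^{n}$ applied to the polynomial extension of each term of the series, or an analytic continuation argument. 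You flag this correctly; making the series-based bound explicit would close the gap without appeal to continuation. With those caveats recorded, the proposal is sound.
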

\section{Harmonic analysis related to the Dunkl operator}
\noindent In this section, we present some properties of the Dunkl transform, the Dunkl translation and the Dunkl convolution studied and developed in great detail in [2,6,16,18].\\
The Dunkl transform is defined for $f\in L^{1}\left(\mathbb{R}^{d},h_{\kappa}^{2}\right)$ by :
\begin{equation}
D_{\kappa}f(x)=c_{\kappa}\int_{\mathbb{R}^{d}}f(y)E_{\kappa}\left(-ix,\, y\right)h_{\kappa}^{2}(y)dy,\,\,\, x\in\mathbb{R}^{d}.
\end{equation}
If $\kappa=0$, then $V_{\kappa}=id$ and the Dunkl transform coincides with the usual Fourier transform. If $d=1$ and $G=\mathbb{Z}_{2}$, then the Dunkl transform is related closely to the Hankel transform on the real line.\\
\begin{theorem}\rm{(see [16])}.
\begin{enumerate}
\item For $f\in L^{1}\left(\mathbb{R}^{d},\, h_{k}^{2}\right),$ we have $D_{\kappa}f\in C_{0}\left(\mathbb{R}^{d}\right),$ and 
\[\parallel D_{\kappa}f\parallel_{C_{0}}\leq\parallel f\parallel_{\kappa,1}.\]
\item When both $f$ and $D_{\kappa}f$ are $\in L^{1}\left(\mathbb{R}^{d},\, h_{k}^{2}\right),$ we have the inversion formula 
$$\displaystyle f(x)=c_{\kappa}\int_{\mathbb{R}^{d}}D_{\kappa}f(y)E_{\kappa}(ix,\, y)h_{\kappa}^{2}(y)dy.$$
\item  The Dunkl transform $D_{\kappa}$ is an isomorphism of the Schwartz
class $\mathcal{S}(\mathbb{R}^{d})$ onto itself, and $D_{\kappa}^{2}f(x)=f(-x).$
\item The Dunkl transform $D_{\kappa}$ on $\mathcal{S}(\mathbb{R}^{d})$ extends uniquely to an isometry of $L^{2}\left(\mathbb{R}^{d},\, h_{k}^{2}\right).$
\item If $f,\, g\in L^{2}\left(\mathbb{R}^{d},\, h_{k}^{2}\right)$ then
$$\displaystyle \int_{\mathbb{R}^{d}}D_{\kappa}f(y)g(y)h_{\kappa}^{2}(y)dy=\int_{\mathbb{R}^{d}}f(y)D_{\kappa}g(y)h_{\kappa}^{2}(y)dy.$$
\end{enumerate}
\end{theorem}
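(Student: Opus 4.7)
The plan is to prove each of the five parts using standard Fourier-analytic techniques adapted to the Dunkl setting, relying on the bound $|E_{\kappa}(-ix,y)|\leq 1$ and the eigenfunction identity $T_{i}^{(y)}E_{\kappa}(-ix,y)=-ix_{i}E_{\kappa}(-ix,y)$.

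For (1), the pointwise bound $\|D_{\kappa}f\|_{\infty}\leq c_{\kappa}\|f\|_{\kappa,1}$ follows directly from $|E_{\kappa}(-ix,y)|\leq 1$. Continuity of $D_{\kappa}f$ is obtained by dominated convergence, majorizing the integrand by $|f(y)|h_{\kappa}^{2}(y)$. The decay at infinity ($D_{\kappa}f\in C_{0}$) is deduced by density: since $\mathcal{S}(\mathbb{R}^{d})$ is dense in $L^{1}(\mathbb{R}^{d},h_{\kappa}^{2})$ and $D_{\kappa}$ will be shown in (3) to map $\mathcal{S}$ into itself, the uniform bound transports the $C_{0}$ property from the dense subspace via a standard $3\varepsilon$ argument.

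For (3), I would first establish the two intertwining relations $D_{\kappa}(T_{i}f)(x)=ix_{i}D_{\kappa}f(x)$ and $T_{i}(D_{\kappa}f)(x)=-iD_{\kappa}(y_{i}f)(x)$ for $f\in\mathcal{S}$. The first follows from the adjointness of $T_{i}$ with respect to the weighted measure $h_{\kappa}^{2}(y)dy$, i.e.\ an integration-by-parts identity for Dunkl operators, while the second comes from differentiating under the integral sign and using the eigenfunction identity above. Iterating these two relations gives $D_{\kappa}:\mathcal{S}\to\mathcal{S}$ continuously. The identity $D_{\kappa}^{2}f(x)=f(-x)$ on $\mathcal{S}$ would then be reduced to the inversion formula, which I would first verify on the Gaussian $e^{-\|x\|^{2}/2}$ (which is its own Dunkl transform) and then extend to all of $\mathcal{S}$ using the Dunkl-Hermite basis of $\mathcal{S}$, or equivalently via the generalized heat semigroup $e^{-t\sum T_{i}^{2}}$.

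Parts (2), (4) and (5) would then follow by standard density and duality arguments: (5) is Fubini applied to the symmetric kernel $E_{\kappa}(-ix,y)$; (4) combines (5) with (3) to produce Plancherel on $\mathcal{S}$, after which density of $\mathcal{S}$ in $L^{2}(\mathbb{R}^{d},h_{k}^{2})$ extends $D_{\kappa}$ to an isometry; and (2) is obtained by multiplying with a Gaussian approximate identity of the form $e^{-\varepsilon\|y\|^{2}}$, applying the Schwartz inversion in (3), and passing $\varepsilon\to 0$ via dominated convergence using the $L^{1}$ hypotheses on $f$ and $D_{\kappa}f$. The main technical obstacle lies in pinning down the inversion formula on $\mathcal{S}$: it requires the construction of an explicit orthonormal system of joint eigenfunctions of $D_{\kappa}$ (generalized Hermite functions), which is the step where the Dunkl framework genuinely departs from the classical case and rests essentially on R\"osler's positivity of $V_{\kappa}$ together with the integral representation in Theorem~\ref{t1}.
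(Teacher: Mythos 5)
This theorem is stated in the paper with the attribution ``(see [16])'' and is not proved there at all --- it is quoted from Thangavelu--Xu (and ultimately from de Jeu and R\"osler) --- so there is no in-paper argument to compare yours against. Your outline is, in substance, the standard proof from that literature and is sound: the sup-norm bound and continuity in (1) from $|E_{\kappa}(-ix,y)|\leq 1$ and dominated convergence, the two intertwining relations $D_{\kappa}(T_{i}f)=ix_{i}D_{\kappa}f$ and $T_{i}D_{\kappa}f=-iD_{\kappa}(y_{i}f)$ (the first resting on the anti-symmetry of $T_{i}$ in $L^{2}(h_{\kappa}^{2})$ for Schwartz functions) to get $D_{\kappa}:\mathcal{S}\to\mathcal{S}$, the self-reciprocal Gaussian plus the generalized Hermite eigenbasis for $D_{\kappa}^{2}f(x)=f(-x)$, the multiplication formula (5) by Fubini on $L^{1}\cap L^{2}$ extended by density, Plancherel from (3) and (5), and the Gaussian approximate-identity argument for (2). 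You correctly identify the one genuinely non-classical ingredient, namely the inversion on $\mathcal{S}$ via R\"osler's Hermite system; note only that your proof of the $C_{0}$ property in (1) must be ordered after the Schwartz-space part of (3) (which it can be, since that part does not use (1)), and that the constant in $\parallel D_{\kappa}f\parallel_{C_{0}}\leq\parallel f\parallel_{\kappa,1}$ depends on the normalization chosen for $c_{\kappa}$.
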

\noindent Let $y\in\mathbb{R}^{d}$ be given. The Dunkl translation operator $f\longmapsto\tau_{y}f$ is defined in $L^{2}\left(\mathbb{R}^{d},\, h_{k}^{2}\right)$ by the equation 
\begin{equation}
D_{\kappa}(\tau_{y}f)(x)=E_{\kappa}(iy,x)D_{\kappa}f(x),\,\,\, x\in\mathbb{R}^{d}.
\end{equation}
The above definition gives $\tau_{y}f$ as an $L^{2}$ function.\\
Let 
\begin{equation}
A_{\kappa}(\mathbb{R}^{d})=\left\{ f\in L^{1}(\mathbb{R}^{d},h_{\kappa}^{2}):\, D_{\kappa}f\in L^{1}(\mathbb{R}^{d},h_{\kappa}^{2})\right\}.
\end{equation}
Note that $A_{\kappa}(\mathbb{R}^{d})$ is contained in the intersection of $L^{1}(\mathbb{R}^{d},h_{\kappa}^{2})$ and $L^{\infty}$ and hence is a subspace of $L^{2}(\mathbb{R}^{d},h_{\kappa}^{2})$. For $f\in A_{\kappa}(\mathbb{R}^{d})$ we have
\begin{equation}
\tau_{y}f(x)=\int_{\mathbb{R}^{d}}E_{\kappa}(ix,y)E_{\kappa}(-iy,\xi)D_{\kappa}f(\xi)h_{\kappa}^{2}(\xi)d\xi.
\end{equation}
Before stating some properties of the generalized translation operator let us mention that there is an abstract formula for $\tau_{y}$ given in terms of intertwining operator $V_{k}$ and its inverse. It takes the form \rm[18]. For $f\in C^{\infty}(\mathbb{R}^{d})$ we have 
\begin{equation}\label{ekm01} 
\tau_{y}f(x)=V_{k}^{(x)}\otimes V_{k}^{(y)}\left(W_{k}f(x-y)\right).
\end{equation}
\begin{theorem}\label{t3}\rm{(see[17])}. If $\varphi\in\mathcal{A}_{k}(\mathbb{R})$, then
$$\displaystyle
W_{k}\varphi(x)=\frac{1}{c_{k}}\int_{\mathbb{R}^{d}}e^{i<x,y>}D_{k}\varphi(y)h_{k}^{2}(y)dy.$$
\end{theorem}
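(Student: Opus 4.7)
The plan is to derive this formula by applying the inverse intertwining operator $W_k$ to the Dunkl inversion formula. Since $\varphi \in \mathcal{A}_k(\mathbb{R})$, both $\varphi$ and $D_k\varphi$ lie in $L^1(\mathbb{R},h_k^2)$, so part (2) of the preceding theorem gives
\begin{equation*}
\varphi(x) = c_k \int_{\mathbb{R}} D_k\varphi(y)\, E_k(ix,y)\, h_k^2(y)\, dy.
\end{equation*}
The defining identity $E_k(ix,y) = V_k\!\left(e^{i\langle\cdot,y\rangle}\right)(x)$ rewrites the right-hand side as an integral in which the $x$-dependence sits entirely inside the operator $V_k$ applied to the ordinary exponential in the variable on which $V_k$ acts.

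Next I would formally apply $W_k = V_k^{-1}$ in the variable $x$ to both sides. If the interchange of $W_k$ with the integral is legitimate, then, since $W_k V_k$ acts as the identity on $C^\infty(\mathbb{R})$ and $e^{i\langle\cdot,y\rangle}$ lies in $C^\infty(\mathbb{R})$, one obtains
\begin{equation*}
W_k\varphi(x) = c_k \int_{\mathbb{R}} D_k\varphi(y)\, e^{i x y}\, h_k^2(y)\, dy,
\end{equation*}
which is the stated formula (up to the placement of the normalizing constant in the paper's convention).

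The main obstacle is thus the justification of moving $W_k$ inside the integral. I would approach this by first approximating the integral by partial sums over a sequence of compact sets $K_n \uparrow \mathbb{R}$: on each $K_n$ the integrand is a continuous function of $(x,y)$, so the Riemann–type approximation converges to the integral in the topology of uniform convergence on compacta, hence in $C^\infty(\mathbb{R})$ when combined with the decay estimate $|E_k(ix,y)| \le 1$ from part (4) of the earlier proposition and the analogous bound on its derivatives via the operators $T_i$. Since $W_k$ is, by de Jeu's result, a continuous isomorphism of $C^\infty(\mathbb{R})$, it commutes with such limits and sums, which yields the desired exchange.

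A cleaner alternative, which I would likely choose for the actual proof, is to test the identity against Schwartz functions: take $\psi \in \mathcal{S}(\mathbb{R})$, use the duality $\int (W_k\varphi)(x)\,\psi(x)\, h_k^2(x)\,dx = \int \varphi(x)\, (W_k^*\psi)(x)\, h_k^2(x)\, dx$ together with Plancherel's identity for $D_k$, and reduce the verification to the pointwise identity $V_k(e^{i\langle\cdot,y\rangle})(x) = E_k(ix,y)$. This bypasses any pointwise exchange of limits, and the only analytic input needed is the $L^1$-integrability furnished by the hypothesis $\varphi \in \mathcal{A}_k(\mathbb{R})$.
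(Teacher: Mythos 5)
The paper offers no proof of this statement to compare against: it is quoted from Torossian~[17] and used as a black box. Your overall route --- apply $W_k$ to the Dunkl inversion formula and use that $W_k\bigl(E_k(i\cdot,y)\bigr)=e^{i\langle\cdot,y\rangle}$, which is just the defining identity $E_k(ix,y)=V_k\bigl(e^{i\langle\cdot,y\rangle}\bigr)(x)$ read backwards --- is the natural derivation and almost certainly the intended one, and you are right that the honest content of the theorem is the interchange of $W_k$ with the integral.

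That interchange is, however, exactly where your argument does not close under the stated hypothesis $\varphi\in\mathcal{A}_k(\mathbb{R})$. De Jeu's theorem makes $W_k$ a continuous isomorphism of $C^\infty(\mathbb{R})$ for the topology of uniform convergence of all derivatives on compacta; to commute it with the integral you need the integral to converge as a vector-valued integral in that topology, i.e.\ $\int (1+|y|)^{n}\,|D_k\varphi(y)|\,h_k^2(y)\,dy<\infty$ for every $n$, because the bound $|E_k(ix,y)|\le 1$ controls only the zeroth seminorm while the $x$-derivatives of $E_k(ix,y)$ grow like $|y|^{n}$ (this is visible already from $T^{(x)}E_k(ix,y)=iy\,E_k(ix,y)$). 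Membership in $\mathcal{A}_k(\mathbb{R})$ gives only $D_k\varphi\in L^1(h_k^2)$, with no moment conditions; indeed without such decay the function $\varphi$ need not even be $C^\infty$, so $W_k\varphi$ is not obviously defined and the statement itself is loose. Your duality alternative hides the same difficulty inside the unexamined adjoint $W_k^{*}$, for which no formula is available in the paper. The clean repair is to prove the identity first for $\varphi\in\mathcal{S}(\mathbb{R})$, where $D_k\varphi\in\mathcal{S}(\mathbb{R})$ and the integral genuinely converges in $C^\infty(\mathbb{R})$, and then either extend by a density argument or observe that this restricted version suffices for every use the paper makes of the theorem. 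Finally, note that your computation produces the constant $c_k$ in front of the integral rather than the $1/c_k$ of the statement; you flag this, and the paper is itself inconsistent (in the proof of Lemma~\ref{l2}, equation~(\ref{1w1}) drops the constant entirely), but one should fix the normalization of $c_k$ in the inversion formula before asserting which constant is correct.
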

\vskip0.3cm
\section{strictly Dunkl positive definite functions}
\begin{defin}\label{d1}
A function $\varphi$ of $ L^{2}\left(\mathbb{R}^{d},\, h_{k}^{2}\right)$ is called
Dunkl positive definite (resp. stictly Dunkl positive definte) if for  every finite distinct real numbers  $x_{1},...,x_{n},$ and every complex numbers $\alpha_{1}\,,...,\,\alpha_{n} $, not all zero, the inequality 
$$\displaystyle \sum_{j=1}^{n}\sum_{k=1}^{n}\alpha_{j}\overline{\alpha_{k}}\tau_{x_{j}}\left(\varphi\right)(x_{k})\geq0,\;\;(resp.>0)$$
holds true. 
Where $\tau_{x}$ denotes the Dunkl translation.
\end{defin} 
\begin{theorem}\rm{(see [7])}. 
Let $\varphi\in\mathcal{A}_{\kappa}(\mathbb{R}^{d})$, nonidentically zero and Dunkl positive definite function. Then $\varphi$ is strictly Dunkl positive definite.
\end{theorem}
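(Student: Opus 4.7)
The strategy is to transport the quadratic form to the Dunkl transform side and apply the Dunkl--Bochner theorem proved in [7], which asserts that for $\varphi\in\mathcal{A}_{\kappa}(\mathbb{R}^{d})$ the Dunkl positive definiteness is equivalent to $D_{\kappa}\varphi\geq 0$ almost everywhere. Using equation (9), one writes
$$\tau_{x_{j}}\varphi(x_{k})=c_{\kappa}\int_{\mathbb{R}^{d}}E_{\kappa}(ix_{k},\xi)\,E_{\kappa}(-ix_{j},\xi)\,D_{\kappa}\varphi(\xi)\,h_{\kappa}^{2}(\xi)\,d\xi,$$
and invoking the identity $\overline{E_{\kappa}(ix_{k},\xi)}=E_{\kappa}(-ix_{k},\xi)$ for real $x_{k},\xi$ (Proposition 2, item (3)), summation and factorization give
$$\sum_{j,k=1}^{n}\alpha_{j}\overline{\alpha_{k}}\,\tau_{x_{j}}\varphi(x_{k})=c_{\kappa}\int_{\mathbb{R}^{d}}\Bigl|\sum_{j=1}^{n}\alpha_{j}E_{\kappa}(-ix_{j},\xi)\Bigr|^{2}D_{\kappa}\varphi(\xi)\,h_{\kappa}^{2}(\xi)\,d\xi.$$

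Next, since $\varphi\not\equiv 0$ the Dunkl inversion formula forces $D_{\kappa}\varphi\not\equiv 0$; combined with $D_{\kappa}\varphi\geq 0$, this means that the set $\{\xi:D_{\kappa}\varphi(\xi)>0\}$ has positive $h_{\kappa}^{2}$-measure. The crux is then to verify that the function $g(\xi):=\sum_{j=1}^{n}\alpha_{j}E_{\kappa}(-ix_{j},\xi)$ does not vanish identically on $\mathbb{R}^{d}$. The key lemma is the linear independence of the Dunkl exponentials $\{E_{\kappa}(-ix_{j},\cdot)\}_{j=1}^{n}$ whenever $x_{1},\dots,x_{n}$ are pairwise distinct. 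To prove it, I would choose a vector $v\in\mathbb{R}^{d}$ such that the scalars $\langle v,x_{j}\rangle$ are pairwise distinct (a generic $v$ works since the $x_{j}$ are distinct); by Proposition 1, each $E_{\kappa}(-ix_{j},\cdot)$ is an eigenfunction of the operator $\sum_{i=1}^{d}v_{i}T_{i}$ with eigenvalue $-i\langle v,x_{j}\rangle$, and eigenfunctions for distinct eigenvalues of one operator are linearly independent.

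Since $g$ is then a nontrivial real-analytic function on $\mathbb{R}^{d}$, its zero set has Lebesgue measure zero. Consequently the integrand $|g(\xi)|^{2}D_{\kappa}\varphi(\xi)h_{\kappa}^{2}(\xi)$ is strictly positive on a set of positive measure, so the integral is strictly positive, establishing the strict Dunkl positive definiteness of $\varphi$. The main obstacle is precisely the linear independence of the Dunkl exponentials, but, as sketched above, it reduces cleanly to the elementary fact about eigenfunctions for distinct eigenvalues once the Dunkl operators are combined in a direction $v$ separating the points $x_{j}$.
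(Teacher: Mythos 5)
Your proof is correct and follows essentially the route of the original argument: the paper only cites [7] for this statement, but its own Lemma 2 (for $W_{k}\varphi$) runs exactly the same scheme you use, namely writing the quadratic form as $c_{\kappa}\int\bigl|\sum_{j}\alpha_{j}E_{\kappa}(-ix_{j},\xi)\bigr|^{2}D_{\kappa}\varphi(\xi)h_{\kappa}^{2}(\xi)\,d\xi$ via the Bochner-type characterization, and then showing the exponential sum cannot vanish where the continuous nonnegative function $D_{\kappa}\varphi$ is positive. The one genuinely self-contained ingredient you add is the linear independence of the Dunkl kernels $E_{\kappa}(-ix_{j},\cdot)$ for distinct $x_{j}$, which you obtain by viewing them (via $E_{\kappa}(x,y)=E_{\kappa}(y,x)$ and Proposition 1) as eigenfunctions of $\sum_{i}v_{i}T_{i}$ with distinct eigenvalues $-i\langle v,x_{j}\rangle$ for a separating direction $v$; this cleanly replaces the appeal to Wendland's Lemma 6.7 that the paper makes in the classical case, and combined with real-analyticity it gives that the zero set of the sum is Lebesgue-null, so the integral is strictly positive. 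No gaps.
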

\begin{theorem}\rm{(see [7])}. 
Let $\varphi\in \mathcal{A}_{\kappa}(\mathbb{R}^{d}).$ Then, $\varphi$ is Dunkl positive
definite, if and only if, there exist a nonnegative function $\psi\in \mathcal{A}_{\kappa}(\mathbb{R}^{d})$ such that
\begin{equation}
\varphi=D_{\kappa}\psi.
\end{equation}
\end{theorem}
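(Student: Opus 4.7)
The plan is to turn the quadratic form defining Dunkl positive definiteness into an integral against $D_\kappa\varphi$ and recognise its integrand as a pointwise modulus squared. Starting from the integral representation (9), substituting into the sum, and interchanging sum and integral should give
\begin{equation}
\sum_{j,k}\alpha_j\overline{\alpha_k}\,\tau_{x_j}\varphi(x_k) \;=\; \int_{\mathbb{R}^d}\left|\sum_{k}\overline{\alpha_k}\,E_\kappa(ix_k,\xi)\right|^2 D_\kappa\varphi(\xi)\,h_\kappa^2(\xi)\,d\xi,
\end{equation}
where the identification of the integrand as a modulus squared uses property (3) of Proposition 2, namely $\overline{E_\kappa(ix_j,\xi)}=E_\kappa(-ix_j,\xi)$ for real $x_j,\xi$, so that the double sum factorises as $\bigl(\sum_j\alpha_j\overline{E_\kappa(ix_j,\xi)}\bigr)\bigl(\sum_k\overline{\alpha_k}E_\kappa(ix_k,\xi)\bigr)$.

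The sufficient direction is then immediate: if $\varphi=D_\kappa\psi$ with $\psi\in\mathcal{A}_\kappa(\mathbb{R}^d)$, $\psi\geq 0$, then by Theorem 2(3) one has $D_\kappa\varphi(\xi)=\psi(-\xi)\geq 0$, and the right hand side of the identity is nonnegative for every finite choice of $\{\alpha_k,x_k\}$.

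For necessity, assume $\varphi$ is Dunkl positive definite. The goal is to conclude $D_\kappa\varphi\geq 0$ pointwise; defining $\psi(\xi):=D_\kappa\varphi(-\xi)$ then yields a nonnegative function, and membership $\psi\in\mathcal{A}_\kappa(\mathbb{R}^d)$ holds because $\varphi,D_\kappa\varphi\in L^1(\mathbb{R}^d,h_\kappa^2)$, while $D_\kappa\psi=\varphi$ follows from Dunkl inversion (Theorem 2(2)). To deduce $D_\kappa\varphi\geq 0$, I would use an approximation argument: for any nonnegative bump $h\in C_c(\mathbb{R}^d)$, write $\sqrt{h}$ via the inversion formula as the Dunkl superposition $\sqrt{h}(\xi)=c_\kappa\int D_\kappa(\sqrt{h})(x)E_\kappa(ix,\xi)h_\kappa^2(x)dx$, approximate by Riemann sums of the form $\sum_k\overline{\alpha_k}E_\kappa(ix_k,\xi)$, square, and pass to the limit in the key identity to obtain $\int h\cdot D_\kappa\varphi\cdot h_\kappa^2 \geq 0$. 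Since $h\geq 0$ in $C_c(\mathbb{R}^d)$ is arbitrary and $D_\kappa\varphi$ is continuous (as $\varphi\in L^1$, by Theorem 2(1)), this forces $D_\kappa\varphi\geq 0$ pointwise.

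The main obstacle is this final approximation step: one must control the Riemann sum uniformly on a large compact set and integrably outside it, so that the positivity inequality at the finite level survives the passage to the limit. The bound $|E_\kappa(-ix,\xi)|\leq 1$ from Proposition 2(4) together with $D_\kappa\varphi\in L^1(\mathbb{R}^d,h_\kappa^2)$ should supply the required domination; the only delicate point is that the Riemann approximation of $\sqrt{h}$ must be uniform enough (on a set exhausting the support) for the squared approximation to converge in the relevant sense against the integrable measure $D_\kappa\varphi\cdot h_\kappa^2\,d\xi$.
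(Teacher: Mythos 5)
The paper does not actually prove this theorem: it is quoted from reference [7] with no argument reproduced here, so the only internal comparison available is with the computations the authors carry out in Section 5. Measured against those, your approach is exactly the right one: the identity $\sum_{j,l}\alpha_j\overline{\alpha_l}\,\tau_{x_j}\varphi(x_l)=\int_{\mathbb{R}^d}\bigl|\sum_l\overline{\alpha_l}E_\kappa(ix_l,\xi)\bigr|^2D_\kappa\varphi(\xi)h_\kappa^2(\xi)\,d\xi$ (using $\overline{E_\kappa(ix,\xi)}=E_\kappa(-ix,\xi)$ and the corrected form of (9), whose printed version has an obvious typo) is precisely the mechanism the authors use in the proof of their Schoenberg-type theorem and in Lemma 2, and your sufficiency direction via $D_\kappa(D_\kappa\psi)=\psi(-\cdot)\ge 0$ is immediate. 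The one genuine gap is in the necessity step: for $h\in C_c(\mathbb{R}^d)$, $h\ge 0$, the function $\sqrt{h}$ is in general not smooth (it fails to be differentiable where $h$ vanishes to finite order), so it need not belong to $\mathcal{A}_\kappa(\mathbb{R}^d)$ and the pointwise inversion formula you want to apply to it is not available. The standard repair is to run your Riemann-sum argument for an arbitrary $g\in C_c^\infty(\mathbb{R}^d)$, concluding $\int_{\mathbb{R}^d}|g(\xi)|^2D_\kappa\varphi(\xi)h_\kappa^2(\xi)\,d\xi\ge 0$; since $D_\kappa\varphi$ is continuous (Theorem 2(1)) and the squares $|g|^2$ localize anywhere, this already forces $D_\kappa\varphi\ge 0$ pointwise. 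For the limit passage itself your instincts are right, but you should first truncate the inversion integral to a large ball (the tail is small uniformly in $\xi$ because $|E_\kappa(ix,\xi)|\le 1$ and $D_\kappa g\in L^1(\mathbb{R}^d,h_\kappa^2)$) and only then discretize; the resulting sums are uniformly bounded by $c_\kappa\|D_\kappa g\|_{\kappa,1}$ and converge pointwise, so dominated convergence against the finite measure $|D_\kappa\varphi|h_\kappa^2\,d\xi$ closes the argument. With these two standard adjustments your proof is complete and, as far as one can tell from this paper, coincides with the intended one.
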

\begin{defin}
A function $\Phi:\mathbb{R}^{d}\longrightarrow\mathbb{R}$ is said
to be radial if there exists a function $\varphi:[0,\infty[\longrightarrow\mathbb{R}$
such that $\Phi(x)=\varphi\left(\parallel x\parallel\right)$ for all
$x\in\mathbb{R}^{d}.$
\end{defin}
\vskip0.3cm
\section{Dunkl completely monotonic functions}

\begin{defin}\label{d3} A function $\varphi$ is called Dunkl completely monotonic on 
$(-\sigma,\sigma), \sigma> 0$ if $\varphi\in C\left((-\sigma,\sigma)\right)$ has 
derivatives for all orders on $]-\sigma,\sigma[$ and 
\begin{equation}
(-1)^{n}T_{k}^{n}\varphi(x)\geq0
\end{equation}
for all $n\in\mathbb{N}$ and $x\in]-\sigma,\sigma[.$\\
For $k=0, T_{k}f=f'$, we retreive the classical definition.
\end{defin}

\begin{remark} 
It's clear that if $\varphi$ and $\psi$ are Dunkl completely monotonic, then $\alpha\varphi+\beta\psi$
is also, where $\alpha$ and $\beta$ are a nonnegative constants.
\end{remark}
\begin{exe}\label{ex1}
For $y\geq0,$ the function $x\longmapsto E_{k}(-x,y)$ is Dunkl completely
monotonic on $\mathbb{R}.$ Indeed, for $x,y\in\mathbb{R}$ we have :
$$E_{k}(x,y)\geq 0$$
and 
$$T_{k}E_{k}(-x,y)=-yE_{k}(-x,y).$$
Thus
$$(-1)^{n}T_{k}^{n}E_{k}(-x,y)=y^{n}E_{k}(-x,y)\geq 0;\;y\geq 0.$$
\end{exe}
\begin{propo}\label{www}
Let $0<\sigma\leq+\infty$ and $\mu$ a  measure
on $M_{\sigma}(\mathbb{R}),$ then 
$$\displaystyle 
\varphi(x)=\int_{0}^{+\infty}E_{k}(-x,y)d\mu(y)$$
is Dunkl completely monotonic on $[-\sigma,\sigma].$
\end{propo}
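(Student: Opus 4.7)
The plan is to reduce this proposition to Example~1 by legitimately interchanging $T_k^n$ with integration against $\mu$. Once this interchange is justified, the pointwise identity $T_k^n\bigl[E_k(-\cdot,y)\bigr](x) = (-y)^n E_k(-x,y)$ from Example~1 yields
$$(-1)^n T_k^n \varphi(x) = \int_0^{+\infty} y^n E_k(-x,y)\,d\mu(y),$$
and every factor in the integrand is nonnegative (again by Example~1), which gives the result.

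First I would verify that $\varphi$ is well-defined and continuous on $[-\sigma,\sigma]$. The pointwise bound $|E_k(-x,y)| \leq e^{|x|\,y} \leq e^{\sigma y}$ for $x \in [-\sigma,\sigma]$ and $y \geq 0$, combined with the hypothesis $\mu \in M_\sigma(\mathbb{R})$, produces the $\mu$-integrable dominant $e^{\sigma y}$, so continuity follows from dominated convergence. For smoothness on the open interval I would restrict $x$ to an arbitrary $[-\sigma',\sigma']$ with $\sigma' < \sigma$; the elementary inequality $y^n \leq C_n e^{(\sigma - \sigma')y}$ then shows that $y^n e^{\sigma' y}$ is $\mu$-integrable for every $n$, supplying dominants of all orders.

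The core step is the interchange $T_k\int = \int T_k$. Writing $T_k f(x) = f'(x) + k\,(f(x) - f(-x))/x$, the reflection part commutes with the integration in $y$ immediately, so only the derivative part requires dominated convergence. Combining the eigenvalue relation $T_k E_k(-\cdot,y)(x) = -y E_k(-x,y)$ with the decomposition of $T_k$ shows that $\partial_x E_k(-x,y)$ is bounded by $C(1 + y)\,e^{\sigma' y}$ on $[-\sigma',\sigma']$, which is integrable against $d\mu$. Iterating $n$ times produces
$$T_k^n \varphi(x) = \int_0^{+\infty} T_k^n\bigl[E_k(-\cdot,y)\bigr](x)\,d\mu(y) = (-1)^n \int_0^{+\infty} y^n E_k(-x,y)\,d\mu(y),$$
the $n$-th iteration being dominated by the $\mu$-integrable function $y^n e^{\sigma' y}$.

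The principal technical nuisance is the nonlocal reflection term in $T_k$, which carries an apparent $1/x$ singularity at the origin. This is defused by the real-analyticity of $E_k(\cdot,y)$: the difference $E_k(-x,y) - E_k(x,y)$ vanishes at $x = 0$ and is of size $O(xy)$ there, uniformly for $y$ in bounded ranges, so the quotient extends continuously across $x = 0$ with a bound of the form $C y\, e^{\sigma' y}$. Combined with the exponential majorant on $E_k$ and the moment condition on $\mu$, this yields uniform dominants that legalize every interchange, completing the proof.
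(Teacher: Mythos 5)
Your proposal is correct and follows essentially the same route as the paper: reduce to the eigenvalue identity $(-1)^nT_k^nE_k(-x,y)=y^nE_k(-x,y)\geq 0$ of Example~\ref{ex1} and pass $T_k^n$ under the integral sign using the bound $|E_k(-x,y)|\leq e^{|x|y}$ together with the moment condition defining $M_\sigma(\mathbb{R})$. The paper simply asserts the interchange in one line, whereas you supply the dominated-convergence justification (including the treatment of the $1/x$ term at the origin) that the paper omits.
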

\begin{proof} By example \ref{ex1} and since $\mu\in M_{\sigma}(\mathbb{R}),$ we get 
$$(-1)^{n}T_{k}^{n}\varphi(x)=\int_{0}^{\infty}y^{n}E_{k}(-x,y)d\mu(y)\geq 0$$
for all $n\in\mathbb{N}$ and $x\in]-\sigma,\sigma[.$ Moreover, $\varphi$
is continuous on $[-\sigma,\sigma],$ we conclude.
\end{proof}

\begin{propo}\label{p4}
Let $\varphi\in C^{\infty}\left((a,b)\right)$ and $\varphi$ is completely
monotonic function on $(a,b),$ then $V_{k}\varphi$ is Dunkl completely
monotonic on $(a,b).$
\end{propo}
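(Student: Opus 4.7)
The plan is to exploit the intertwining identity $T_k V_k = V_k\,d/dx$, which is part of the defining property (3), together with the positivity of $V_k$ furnished by Theorem \ref{t1}.

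First, I would iterate the intertwining relation. A straightforward induction on $n \in \N$ gives
$$T_k^n(V_k\varphi) = V_k\bigl(\varphi^{(n)}\bigr), \qquad n \geq 0,$$
and consequently
$$(-1)^n T_k^n(V_k\varphi)(x) = V_k\bigl((-1)^n \varphi^{(n)}\bigr)(x)$$
for every $x$ at which the right-hand side makes sense. Complete monotonicity of $\varphi$ on $(a,b)$ yields $(-1)^n \varphi^{(n)} \geq 0$ there, so by Theorem \ref{t1},
$$V_k\bigl((-1)^n \varphi^{(n)}\bigr)(x) = \int_{\R}(-1)^n\varphi^{(n)}(y)\,d\mu_x(y) \geq 0,$$
since $\mu_x$ is a probability measure carried by the closed ball $\overline{B(0,|x|)}=[-|x|,|x|]$ on which $(-1)^n\varphi^{(n)}$ is nonnegative, provided this ball sits inside $(a,b)$.

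Next, one must check that $V_k\varphi$ itself is continuous on $(a,b)$. For this I would rely again on the integral representation of Theorem \ref{t1}: $V_k\varphi(x) = \int_{\R} \varphi\,d\mu_x$ is finite because $\varphi$ is continuous on the compact support of $\mu_x$, and a routine continuity-under-the-integral argument based on the weak continuity of $x \mapsto \mu_x$ delivers continuity on $(a,b)$. Smoothness then follows from the identity $T_k^n(V_k\varphi) = V_k(\varphi^{(n)})$ applied at each order.

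The main obstacle is not any analytic subtlety but a domain issue: Rösler's representation is a priori stated for functions defined on all of $\R^d$, whereas $\varphi$ lives only on $(a,b)$, and $V_k\varphi(x)$ requires $\varphi$ on $[-|x|,|x|]$. This is automatic when $(a,b)$ is symmetric about the origin, which is precisely the setting of Definition \ref{d3} (with $(a,b)=(-\sigma,\sigma)$); for a general $(a,b)$ containing $0$ the statement should be read restrictively, confining $x$ to the largest symmetric subinterval of $(a,b)$. Modulo this standing convention, the three steps above complete the proof.
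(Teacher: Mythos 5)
Your argument is exactly the paper's: iterate the intertwining relation $T_k V_k = V_k\,\partial$ to get $T_k^n(V_k\varphi)=V_k(\varphi^{(n)})$ and then invoke the positivity of $V_k$ (via the probability-measure representation of Theorem \ref{t1}) to conclude $(-1)^nT_k^nV_k\varphi\geq 0$. Your additional remarks on the domain issue and on continuity are sensible refinements of a point the paper leaves implicit, but the proof is the same.
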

\begin{proof}
Since $\varphi$ is completely monotonic on $(a,b),$ then 
$$(-1)^{n}\varphi^{(n)}(x)\geq0,\,\, a<x<b.$$
As $V_{k}$ is a positive operator and satisfies 
$$T_{k}\left(V_{k}\varphi\right)=V_{k}\left(\varphi^{'}\right)$$
We get 
$$(-1)^{n}T_{k}^{n}V_{k}\varphi(x)=V_{k}\left((-1)^{n}\varphi^{(n)}(x)\right)\geq0,\,\, a<x<b.$$
\end{proof}
\begin{propo}
Let $\varphi\in C^{1}\left((a,b)\right).$ If $\varphi$ is Dunkl
completely monotonic on $(a,b),$ then $-T_{k}\varphi$ is also Dunkl
completely monotonic on $(a,b).$
\end{propo}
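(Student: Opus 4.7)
The plan is to unwind the definition of Dunkl complete monotonicity and observe that the condition for $-T_k\varphi$ at level $n$ is precisely the condition for $\varphi$ at level $n+1$. So the statement will follow by a one-line shift of index, once I have verified the regularity required by Definition \ref{d3}.

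First, I would check that $-T_k\varphi$ satisfies the smoothness hypothesis. Since $\varphi$ is Dunkl completely monotonic on $(a,b)$, Definition \ref{d3} guarantees $\varphi\in C^\infty((a,b))$. The operator $T_k$ is a first order differential–difference operator which preserves $C^\infty$ on symmetric neighbourhoods (the difference quotient $(\varphi(x)-\varphi(-x))/x$ extends smoothly across $0$ by a standard Taylor argument), so $T_k\varphi$, and therefore $-T_k\varphi$, lies in $C^\infty((a,b))$ and is in particular continuous there.

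Second, I would verify the sign condition. For any $n\in\mathbb{N}$ and $x\in(a,b)$, the iteration rule gives
\[
(-1)^{n}T_{k}^{n}\bigl(-T_{k}\varphi\bigr)(x)\;=\;-(-1)^{n}T_{k}^{n+1}\varphi(x)\;=\;(-1)^{n+1}T_{k}^{n+1}\varphi(x).
\]
Applying the defining inequality for $\varphi$ with index $n+1$ in place of $n$, the right-hand side is nonnegative. Combining with the regularity checked above, $-T_k\varphi$ fulfils both conditions of Definition \ref{d3} on $(a,b)$ and is therefore Dunkl completely monotonic there.

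There is no real obstacle here, since the argument reduces to reading off the definition with a shifted index; the only subtlety worth spelling out is the smoothness of $T_k\varphi$ near the origin (if $0\in(a,b)$), which is handled by the standard observation that $x\mapsto(\varphi(x)-\varphi(-x))/x$ extends to a $C^\infty$ function whenever $\varphi$ does.
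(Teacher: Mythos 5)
Your argument is correct and is exactly what the paper intends: its own proof consists of the single line ``Follows immediately by the definition \ref{d3},'' and your index shift $(-1)^{n}T_{k}^{n}(-T_{k}\varphi)=(-1)^{n+1}T_{k}^{n+1}\varphi$ together with the regularity check is precisely the unwinding of that remark. Nothing further is needed.
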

\begin{proof}
Follows immediately by the definition \ref{d3}.
\end{proof}

\begin{theorem}
Let $\varphi\in\mathcal{A}_{k}(\mathbb{R})$ and $\mu$ is a  measure on $M_{\infty}(\mathbb{R})$ 
such that :
\begin{equation}\label{0017}
\varphi(x)=\int_{0}^{\infty}E_{k}(-x,y)d\mu(y),
\end{equation}
then, the function  $\varphi(x^{2})$ is Dunkl positive definite on $\mathbb{R}$ if and only if $\varphi(x)$ is Dunkl completely monotonic.
\end{theorem}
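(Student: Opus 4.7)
The backward implication is immediate under our hypothesis: the assumption $\mu\in M_{\infty}(\mathbb{R})$ places us in the setting of Proposition \ref{www} with $\sigma=+\infty$, and the representation (\ref{0017}) already forces $\varphi$ to be Dunkl completely monotonic on the whole real line, regardless of any property of $\varphi(x^{2})$.

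For the forward implication I plan to invoke the Dunkl--Bochner-type characterization proved in [7] (the second theorem of Section 4): a function $\Phi\in\mathcal{A}_{k}(\mathbb{R})$ is Dunkl positive definite if and only if $\Phi=D_{k}\psi$ for some nonnegative $\psi\in\mathcal{A}_{k}(\mathbb{R})$. It therefore suffices to exhibit such a $\psi$ with $D_{k}\psi=\varphi(\cdot^{2})$. Combining (\ref{0017}) with a formal Dunkl inversion, the natural candidate is
$$\psi(\xi)=\int_{0}^{\infty}K(\xi,y)\,d\mu(y),\qquad K(\xi,y)=c_{k}\int_{\mathbb{R}}E_{k}(-ix,\xi)\,E_{k}(-x^{2},y)\,|x|^{2k}\,dx.$$
Since $\mu\ge 0$, the task of proving $\psi\ge 0$ reduces to showing $K(\xi,y)\ge 0$ for every $y\ge 0$ and $\xi\in\mathbb{R}$.

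The positivity of $K(\xi,y)$ is the Dunkl analogue of the Fourier--Gaussian identity $\int_{\mathbb{R}}e^{-yx^{2}}e^{-ix\xi}dx=\sqrt{\pi/y}\,e^{-\xi^{2}/(4y)}$ which underlies the proof of the classical Schoenberg theorem. To derive it I would insert the Laplace-type integral representation
$$E_{k}(-x^{2},y)=\frac{\Gamma(k+\frac{1}{2})}{\Gamma(\frac{1}{2})\Gamma(k)}\int_{-1}^{1}e^{-x^{2}yt}(1-t)^{k-1}(1+t)^{k}\,dt$$
that comes from the integral formula for $V_{k}$, interchange the $t$- and $x$-integrations, and recognize each resulting slice as the Dunkl transform of a classical Gaussian, which is itself a nonnegative Gaussian by the standard Dunkl-Gaussian identity. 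After this step $K(\xi,y)$ is exhibited as an integral of nonnegative functions against a nonnegative measure on $(-1,1)$, and the desired positivity follows. With $K(\xi,y)\ge 0$ and $D_{k}\psi=\varphi(\cdot^{2})$ by construction, the characterization from [7] gives that $\varphi(x^{2})$ is Dunkl positive definite.

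The principal obstacle is technical rather than structural: $E_{k}(-x^{2},y)$ grows like $e^{x^{2}y}$ as $|x|\to\infty$ when $y>0$, so the integral defining $K(\xi,y)$ does not converge absolutely in the classical sense, and neither does the $t<0$ portion of the slicing above. The interchanges of integration therefore have to be justified on the combined $(y,t,x)$-integral, where the super-exponential decay built into $\mu\in M_{\infty}(\mathbb{R})$ restores integrability; the argument must consequently be phrased directly on $\varphi(\cdot^{2})$ rather than on the individual slices $E_{k}(-\cdot^{2},y)$, and the nonnegativity of $\psi$ is inherited from the pointwise positivity of the Dunkl transform of a classical Gaussian integrated against nonnegative measures in $y$ and $t$.
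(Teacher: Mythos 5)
Your handling of the backward implication is fine and agrees with the paper: under the hypothesis (\ref{0017}) with $\mu\in M_{\infty}(\mathbb{R})$, Proposition \ref{www} already makes $\varphi$ Dunkl completely monotonic. The forward implication, however, contains a genuine gap at precisely the point you flag and then wave away. Your route to positive definiteness rests on showing $\psi=D_{k}\bigl(\varphi(\cdot^{2})\bigr)\ge 0$ by slicing through the kernel $K(\xi,y)=c_{k}\int_{\mathbb{R}}E_{k}(-ix,\xi)\,E_{k}(-x^{2},y)\,|x|^{2k}\,dx$. For $k>0$ and $y>0$ this integral is not merely delicate, it diverges outright: in the Laplace representation of $E_{k}(-x^{2},y)$ the range $t\in(-1,0)$ contributes $e^{x^{2}y|t|}$, and a Laplace-method estimate at $t=-1$ shows $E_{k}(-x^{2},y)$ itself grows like $e^{x^{2}y}$ up to polynomial factors. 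The hypothesis $\mu\in M_{\infty}(\mathbb{R})$ supplies only exponential moments $\int_{0}^{\infty}e^{\sigma y}d\mu(y)<\infty$; it cannot make $\int_{0}^{\infty}e^{x^{2}y|t|}d\mu(y)$ integrable in $x$ against $|x|^{2k}dx$ (for any $\mu$ not concentrated at $0$ this quantity is bounded below by $c\,e^{\varepsilon x^{2}}$). Hence the combined $(y,t,x)$-integral is not absolutely convergent, Fubini is unavailable, and every $t<0$ slice is the ``Dunkl transform'' of a function that has no Dunkl transform. Your closing remark that the argument ``must be phrased directly on $\varphi(\cdot^{2})$'' is exactly the missing proof: no mechanism is given by which pointwise nonnegativity of $\psi$ survives once the slices are abandoned. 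Note also that invoking the Bochner-type theorem of [7] presupposes $\varphi(\cdot^{2})\in\mathcal{A}_{k}(\mathbb{R})$, which is not among the stated hypotheses.

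The paper circumvents all of this by never passing through the Dunkl transform of the growing functions $E_{k}(-\cdot^{2},y)$. It shows each slice $\phi_{t}(x)=E_{k}(-t^{2},x^{2})$ is Dunkl positive definite directly from Definition \ref{d1}: writing the classical Gaussian $f_{t}(u)=e^{-t^{2}u^{2}}$ via Bochner's theorem against a nonnegative finite measure, and computing the translation through the intertwining formula (\ref{ekm01}) together with Theorem \ref{t3}, one arrives at
\begin{equation*}
\tau_{x}\phi_{t}(y)=\int_{\mathbb{R}}\overline{E_{k}(itx,z)}\,E_{k}(ity,z)\,d\mu(z),
\end{equation*}
so the quadratic form becomes $\int_{\mathbb{R}}\bigl|\sum_{j}\alpha_{j}E_{k}(itx_{j},z)\bigr|^{2}d\mu(z)\ge 0$; integrating over $t$ against the nonnegative measure then handles $\varphi(x^{2})$. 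To repair your argument you would need to replace the kernel-positivity step by such a translation-side computation (or another device that exhibits the quadratic form as a manifest square), since the transform-side positivity you rely on is not available for $k>0$.
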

\begin{proof}
For $t>0$, the function $x\longmapsto f_{t}(x)=e^{-t^{2}x^{2}}$
is positive definite on $\mathbb{R}$. Bochner's theorem implies
that 
$$\displaystyle f_{t}(x)=\int_{\mathbb{R}}e^{-itxy}d\mu(y)$$
where $\mu$ is a finite nonnegative Borel measure on $\mathbb{R}.$\\
Then we have 
$$\displaystyle 
V_{k}\left(f_{t}\right)(x)=E_{k}(-t^{2},x^{2})=\int_{\mathbb{R}}\left(\int_{\mathbb{R}}e^{-i<zt,y>}d\mu(y)\right)d\mu_{x}(z)$$
Since the measures $\mu$ and $\mu_{x}$ are bounded, we have 
$$\displaystyle \phi_{t}(x)=E_{k}(-t^{2},x^{2})=\int_{\mathbb{R}}E_{k}(-itx,y)d\mu(y).$$
Now, we will prove that the function $\phi_{t}:x\longmapsto E_{k}(-t^{2},x^{2})$ is Dunkl positive definite on $\mathbb{R}$, for all $t>0.$ In fact from the formula (\ref{ekm01}) and theorem \ref{t3}, we have 
\begin{equation*}
\begin{split}
\tau_{x}\phi_{t}(y)&=\int_{\mathbb{R}}\int_{\mathbb{R}}W_{k}\phi_{t}(\eta-\xi)d\mu_{x}(\eta)d\mu_{y}(\xi)\\
&=\int_{\mathbb{R}}\int_{\mathbb{R}}W_{k}\left(V_{k}(f_{t})\right)(\eta-\xi)d\mu_{x}(\eta)d\mu_{y}(\xi)\\
&=\int_{\mathbb{R}}\int_{\mathbb{R}}f_{t}(\eta-\xi)d\mu_{x}(\eta)d\mu_{y}(\xi)\\
&=\int_{\mathbb{R}}\int_{\mathbb{R}^{d}}\int_{\mathbb{R}}e^{-i<t\eta,z>}e^{i<t\xi,z>}d\mu_{x}(\eta)d\mu_{y}(\xi)d\mu(z)\\
&=\int_{\mathbb{R}}\left[\int_{\mathbb{R}^{d}}e^{-i<t\eta,z>}d\mu_{x}(\eta)\right]\left[\int_{\mathbb{R}}e^{i<t\xi,z>}d\mu_{y}(\xi)\right]d\mu(z)\\
&=\int_{\mathbb{R}}E_{k}\left(-itx,z\right)E_{k}(ity,z)d\mu(z)\\
&=\int_{\mathbb{R}}\int_{\mathbb{R}}\overline{E_{k}(itx,z)}E_{k}(ity,z)d\mu(z),
\end{split}
\end{equation*}
\\
\noindent which implies that for every finite distinct real numbers $x_{1},x_{2},...,x_{n}$ and every complex numbers $\alpha_{1},\alpha_{2},...,\alpha_{n}$ not all zero, we get 
$$\displaystyle \sum_{j=1}^{n}\sum_{k=1}^{n}\alpha_{j}\overline{\alpha_{k}}\tau_{x_{j}}\phi_{t}(x_{k})=\int_{\mathbb{R}}
\left|\sum_{j=1}^{n}\alpha_{j}E_{k}(itx_{j},z)\right|^{2}d\mu(z)\geq 0.$$

\noindent Hence, the function $x\longmapsto\phi_{t}(x)$ is Dunkl positive definite on $\mathbb{R}$, for all $t>0$.\\
Next, we define the function  
$$\displaystyle  \Phi(x)=\int_{0}^{\infty}E_{k}(-x^{2},t^{2})d\mu(t).$$ 
\\
\noindent Since the function $\Phi\in\mathcal{A}_{k}(\mathbb{R})$, we have for every finite distinct real numbers $x_{1},x_{2},...,x_{n}$ and every complex numbers $\alpha_{1},\alpha_{2},...,\alpha_{n}$ not all zero 

$$\displaystyle
\sum_{j=1}^{n}\sum_{l=1}^{n}\alpha_{j}\overline{\alpha_{l}}\tau_{x_{j}}\Phi(x_{l})=\sum_{j=1}^{n}\sum_{l=1}^{n}\alpha_{j}\overline{\alpha_{l}}\int_{\mathbb{R}}E_{k}(-ix_{j},\xi)E_{k}(ix_{l},\xi)D_{k}\Phi(\xi)h_{k}^{2}(\xi)d\xi \qquad\qquad$$
$$\displaystyle \qquad\qquad=c_{k}\sum_{j=1}^{n}\sum_{l=1}^{n}\alpha_{j}\overline{\alpha_{l}}\int_{\mathbb{R}}E_{k}(-ix_{j},\xi)E_{k}(ix_{l},\xi)\left[\int_{\mathbb{R}}E_{k}(-is,\xi)\Phi(s)h_{k}^{2}(s)ds\right]h_{k}^{2}(\xi)d\xi$$
$$\displaystyle   =\int_{0}^{\infty}\sum_{j=1}^{n}\sum_{l=1}^{n}\alpha_{j}\overline{\alpha_{l}}\int_{\mathbb{R}}E_{k}(-ix_{j},\xi)E_{k}(ix_{l},\xi)\left[c_{k}\int_{\mathbb{R}}E_{k}(-is,\xi)\phi_{t}(s)h_{k}^{2}(s)ds\right]
\times h_{k}^{2}(\xi)d\xi d\mu(t)$$
$$\displaystyle =\int_{0}^{\infty}\sum_{j=1}^{n}\sum_{l=1}^{n}\alpha_{j}\overline{\alpha_{l}}\int_{\mathbb{R}}E_{k}(-ix_{j},\xi)E_{k}(ix_{l},\xi)D_{k}(\phi_{t})(\xi)h_{k}^{2}(\xi)d\xi d\mu(t)$$
$$\displaystyle =\int_{0}^{\infty}\sum_{j=1}^{n}\sum_{l=1}^{n}\alpha_{j}\overline{\alpha_{l}}\tau_{x_{j}}\phi_{t}(x_{l})d\mu(t)\geq 0.\qquad\qquad\qquad\qquad\qquad\qquad$$
The last inequality holds because the function $\phi_{t}(x)=E_{k}(-t^{2},x^{2})$ is Dunkl positive definite and the measure $\mu$ is nonnegative. Thus the function $\Phi$ is Dunkl positive definite. Since 

$$\displaystyle \varphi(x^{2})
=\int_{0}^{\infty}E_{k}(-t^{2},x^{2})d\mu(t)=\int_{0}^{\infty}E_{k}(-t,x^{2})d\mu(\sqrt{t})=\int_{0}^{\infty}E_{k}(-t,x^{2})d\nu(t)=\Phi(x),$$
we conclude.
\end{proof}

\begin{lemme}\label{l1}
Let $\varphi\in\mathcal{A}_{k}(\mathbb{R}^{d})$ be a radial function.
If $\varphi$ is Dunkl positive definite function then $D_{k}\varphi$
is even.
\end{lemme}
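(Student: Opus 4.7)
The plan is to exploit the structural representation from the theorem of~[7] recalled just before Definition~\ref{d1}: since $\varphi\in\mathcal{A}_{k}(\mathbb{R}^{d})$ is Dunkl positive definite, there is a nonnegative $\psi\in\mathcal{A}_{k}(\mathbb{R}^{d})$ with $\varphi=D_{k}\psi$. Combining this with the Dunkl inversion identity $D_{k}^{2}f(x)=f(-x)$, one gets
$$D_{k}\varphi(x)=D_{k}^{2}\psi(x)=\psi(-x),$$
so the desired conclusion ``$D_{k}\varphi$ is even'' becomes exactly equivalent to ``$\psi$ is even''. In this way the lemma reduces to transferring the evenness of $\varphi$ (which is automatic from radiality, since $\varphi(x)=\varphi(-x)$) down to the nonnegative function $\psi$.

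To carry out the transfer, I first record the auxiliary covariance rule
$$D_{k}\bigl(f\circ(-\mathrm{Id})\bigr)(x)=(D_{k}f)(-x),$$
which follows by the substitution $y\mapsto -y$ in the defining integral of $D_{k}$, using $E_{k}(-ix,-y)=E_{k}(ix,y)$ (a consequence of Proposition~2(2)) together with the invariance $h_{k}^{2}(-y)=h_{k}^{2}(y)$, visible directly from the product formula~(1) defining $h_{k}$.

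Equipped with this identity, the argument is then purely formal. Radiality of $\varphi$ yields $D_{k}\psi(x)=D_{k}\psi(-x)$; applying $D_{k}$ to both sides, the left-hand side equals $D_{k}^{2}\psi(x)=\psi(-x)$, while the right-hand side, by the covariance rule applied to $f=D_{k}\psi$, equals $(D_{k}^{2}\psi)(-x)=\psi(x)$. Hence $\psi(x)=\psi(-x)$, and combining with the identification above we conclude $D_{k}\varphi(x)=\psi(-x)=\psi(x)=D_{k}\varphi(-x)$, i.e.\ $D_{k}\varphi$ is even.

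There is no genuinely hard step; the only place where a small check is required is the auxiliary covariance identity $D_{k}(f\circ(-\mathrm{Id}))=(D_{k}f)\circ(-\mathrm{Id})$, which amounts to a change of variables in the defining integral and is the main bookkeeping to get right. Everything else is a one-line application of the representation theorem of~[7] and of the inversion formula $D_{k}^{2}f(x)=f(-x)$.
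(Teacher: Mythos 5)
Your proof is correct, but it takes a genuinely different route from the paper's. The paper argues via conjugation: since a radial $\varphi$ is real-valued, the symmetry $\overline{E_{k}(x,y)}=E_{k}(\overline{x},\overline{y})$ gives $D_{k}\varphi(-x)=\overline{D_{k}\varphi(x)}$, and the positive definiteness is then used in an essential way (through Corollary~1 of~[7], which makes $D_{k}\varphi$ nonnegative, hence real) to remove the conjugate. You instead route everything through the representation $\varphi=D_{k}\psi$, the inversion identity $D_{k}^{2}f(x)=f(-x)$, and the change-of-variables covariance $D_{k}\bigl(f\circ(-\mathrm{Id})\bigr)=(D_{k}f)\circ(-\mathrm{Id})$; each of these steps checks out, since $E_{k}(-ix,-y)=E_{k}(ix,y)$ follows from the homogeneity property of the kernel and $h_{k}^{2}$ is manifestly even. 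What your argument reveals is that the positive definiteness hypothesis is in fact not needed for this conclusion: you never use the nonnegativity of $\psi$, and your covariance identity applied directly to $\varphi$ (which is even because it is radial) already yields $D_{k}\varphi(x)=D_{k}\bigl(\varphi\circ(-\mathrm{Id})\bigr)(x)=D_{k}\varphi(-x)$ in a single line, so the statement holds for every even $\varphi\in L^{1}(\mathbb{R}^{d},h_{k}^{2})$. The detour through $\psi$ and the double application of $D_{k}$ is therefore more machinery than the problem requires, though not an error; the paper's version, by contrast, genuinely leans on the Dunkl positive definiteness.
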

\begin{proof}
For $\varphi\in\mathcal{A}_{k}(\mathbb{R}^{d}),$ we have 
$$\displaystyle 
D_{k}\varphi(x)=c_{k}\int_{\mathbb{R}^{d}}E_{k}(-ix,y)\varphi(y)h_{k}^{2}(y)dy.$$
Thus 
\begin{equation*}
\begin{split}
D_{k}\varphi(-x)&=c_{k}\int_{\mathbb{R}^{d}}E_{k}(ix,y)\varphi(y)h_{k}^{2}(y)dy\\
&=\overline{\int_{\mathbb{R}^{d}}E_{k}(-ix,y)\overline{\varphi(y)}h_{k}^{2}(y)dy}\\
&=\overline{D_{k}\varphi(x)}.
\end{split}
\end{equation*}
Finally, corollary 1 in [7] completes the proof.
\end{proof}
\begin{lemme}\label{l2} Let $\varphi\in\mathcal{A}_{k}(\mathbb{R}).$ If $\varphi$ is Dunkl 
positive definite then the function $W_{k}\varphi$ is strictly positive definite on $\mathbb{R}$. 
\end{lemme}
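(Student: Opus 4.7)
The plan is to reduce the problem to the classical Bochner representation. By the earlier theorem characterizing Dunkl positive definite functions in $\mathcal{A}_k(\mathbb{R})$, there exists a nonnegative $\psi\in\mathcal{A}_k(\mathbb{R})$ with $\varphi = D_k\psi$. I would then invoke Theorem \ref{t3} together with the Dunkl inversion identity $D_k^2 f(x)=f(-x)$ from the transform theorem to compute
\begin{equation*}
W_k\varphi(x)=\frac{1}{c_k}\int_{\mathbb{R}} e^{ixy}D_k\varphi(y)h_k^2(y)\,dy = \frac{1}{c_k}\int_{\mathbb{R}} e^{ixy}\psi(-y)h_k^2(y)\,dy,
\end{equation*}
so that $W_k\varphi$ is (a constant multiple of) the ordinary Fourier transform of the nonnegative finite Borel measure $d\nu(y)=\psi(-y)h_k^2(y)\,dy$.

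Next, I would expand the defining quadratic form for strict positive definiteness. For distinct real $x_1,\dots,x_n$ and complex $\alpha_1,\dots,\alpha_n$ not all zero, Fubini gives
\begin{equation*}
\sum_{j=1}^n\sum_{l=1}^n \alpha_j\overline{\alpha_l}\,W_k\varphi(x_j-x_l)
=\frac{1}{c_k}\int_{\mathbb{R}} \left|\sum_{j=1}^n \alpha_j e^{ix_j y}\right|^2 \psi(-y)h_k^2(y)\,dy.
\end{equation*}
The integrand is nonnegative, so positive definiteness is immediate. Strictness will follow provided the density $y\mapsto \psi(-y)h_k^2(y)$ is positive on a set of positive Lebesgue measure.

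This is where the main (mild) obstacle lies. The weight $h_k^2$ vanishes only on the finite union of reflection hyperplanes, so it is positive almost everywhere. For $\psi$ I would argue that $\psi$ cannot be zero a.e.: otherwise $\varphi=D_k\psi\equiv 0$, which I would rule out either by taking the standing hypothesis $\varphi\not\equiv 0$ (as in the companion strictness theorem of \rm{[7]}) or by noting that $W_k\varphi\equiv 0$ makes the statement vacuous. Since $\psi\geq 0$ and $\psi\not\equiv 0$, the set $\{\psi>0\}$ has positive measure, hence so does $\{\psi(-\cdot)h_k^2>0\}$. Finally, the exponential sum $y\mapsto\sum_j \alpha_j e^{ix_j y}$ is an entire function of $y$ whose zeros form a discrete set unless all $\alpha_j$ vanish (by the classical linear independence of characters with distinct frequencies). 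Consequently the integrand is strictly positive on a set of positive $\nu$-measure, which forces the quadratic form to be strictly positive, completing the proof.
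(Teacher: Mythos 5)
Your proposal is correct and follows essentially the same route as the paper: both identify $W_k\varphi$ as the classical Fourier transform of the nonnegative density $D_k\varphi(y)h_k^2(y)\,dy$ (you substitute $D_k\varphi(y)=\psi(-y)$ via the characterization theorem, the paper keeps $D_k\varphi$ directly), expand the quadratic form, and conclude strictness from the linear independence of the exponentials $e^{ix_jy}$ --- the paper via continuity of $D_k\varphi$, an open set where it is nonzero, and Lemma 6.7 of Wendland, you via an almost-everywhere/discrete-zero-set argument; the two are interchangeable. The one substantive point in your favour is that you explicitly flag the need for $\varphi\not\equiv 0$ (without which $W_k\varphi\equiv 0$ and the conclusion fails), a hypothesis the paper's proof uses silently when it asserts that $D_k\varphi$ is ``nonidentically zero.''
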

\begin{proof}
For $\varphi\in\mathcal{A}_{k}(\mathbb{R}),$ by theorem \ref{t3}, we have 
\begin{equation}\label{1w1}
W_{k}\varphi(x)=\int_{\mathbb{R}}e^{ixy}D_{k}\varphi(y)h_{k}^{2}(y)dy.
\end{equation}
Since $\varphi$ is Dunkl positive definite function on $\mathbb{R},$
we obtain that the function $D_{k}\varphi$ is nonnegative. Thus,
for every finite distinct real numbers $x_{1},...,x_{n}$ and every
complex numbers $\alpha_{1},...,\alpha_{n}$ not all zero, we have 
$$\displaystyle  
\sum_{j=1}^{n}\sum_{l=1}^{n}\alpha_{j}\overline{\alpha_{l}}W_{k}\varphi(x_{j}-x_{l})=
\int_{\mathbb{R}}\bigg|\sum_{j=1}^{n}\alpha_{j}e^{ix_{j}y}\bigg|^{2}D_{k}\varphi(y)h_{k}^{2}(y)dy\geq0,$$
which implies that the function $W_{k}\varphi$ is positive definite
on $\mathbb{R}$.  Now, suppose that the function
$W_{k}\varphi$ is not strictly positive definite, then there exist distinct
reals points $x_{1},x_{2},...,x_{n}$ and complex numbers $\alpha_{1},\alpha_{2},...,\alpha_{n}$ not
all zero such that 
$$\displaystyle 
\sum_{j=1}^{n}\sum_{k=1}^{n}\alpha_{j}\overline{\alpha_{k}}W_{k}\varphi(x_{j}-x_{k})=0.$$
Thus 
$$\displaystyle  
\int_{\mathbb{R}}\left|\sum_{j=1}^{n}\alpha_{j}e^{ix_{j}t}\right|^{2}D_{k}\varphi(t)h_{k}^{2}(t)dt=0.$$
Since $\varphi$ is Dunkl positive definite and belongs to $\mathcal{A}_{k}(\mathbb{R})$, 
we have $D_{k}\varphi$ is nonnegative  continuous function. Then 
$$\displaystyle 
\left|\sum_{j=1}^{n}\alpha_{j}e^{i<x_{j},t>}\right|D_{k}\varphi(t)=0.$$
Moreover, since $D_{k}\varphi$ is nonidentically zero, then there exist
an open subset $U\subset\mathbb{R}$ such that 
$$\displaystyle 
D_{k}\varphi(t)\neq0,\quad \forall t\in U.$$
Thus
$$\displaystyle 
\sum_{j=1}^{n}\alpha_{j}e^{ix_{j}t}=0, \quad\forall t\in U.$$
From lemma 6.7 in \rm{[20~p.72]}, we get 
$$\displaystyle 
\alpha_{j}=0,\,\,\,\forall j\in\{1,...,n\}.$$
Then, we deduce that the function $W_{k}\varphi$ is strictly positive
definite function.
\end{proof}
 
\begin{theorem}
Let $\varphi\in\mathcal{A}_{k}(\mathbb{R})$ be a real function and
Dunkl positive definite. For $k>0$, we consider the function $\varphi_{k}(x)=
D_{k}\varphi(x) |x|^{2k+1}$.\\
If $\varphi_{k}$ is convex and verifie 
$\displaystyle \lim_{\mid x\mid\longrightarrow\infty}\varphi_{k}(x)=0,$
 then 
\begin{enumerate}
\item $W_{k}\varphi$ is even and nonnegative.
\item The function $\varphi(\sqrt{|x|})$ is Dunkl completely monotonic
on $\mathbb{R}.$
\end{enumerate}
\end{theorem}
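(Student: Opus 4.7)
My plan is to handle the two conclusions in sequence, with Part (1) providing the bridge to the integral representation needed for Part (2), after which Proposition \ref{www} applies.

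For Part (1), the starting observation is that $D_k\varphi$ is real, nonnegative and even. Indeed, since $\varphi$ is Dunkl positive definite, Theorem 5 writes $\varphi = D_k\psi$ with $\psi \geq 0$; applying $D_k$ again and using $D_k^2 f(\cdot) = f(-\cdot)$ yields $D_k\varphi(x) = \psi(-x) \geq 0$. The identity $\overline{D_k\varphi(x)} = D_k\varphi(-x)$ from the proof of Lemma \ref{l1}, combined with the real nonnegativity, then forces $D_k\varphi$ to be even. Theorem \ref{t3} therefore rewrites
\[
W_k\varphi(x) = \frac{2}{c_k}\int_0^\infty D_k\varphi(y)\,y^{2k}\cos(xy)\,dy = \frac{2}{c_k}\int_0^\infty \frac{\varphi_k(y)}{y}\cos(xy)\,dy,
\]
which is manifestly even in $x$. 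For nonnegativity a P\'olya-type criterion for Fourier cosine transforms is the natural tool: the convexity of $\varphi_k$ together with $\varphi_k(0)=0$ and $\varphi_k(\infty)=0$ allows a double integration by parts against the nonnegative Stieltjes measure $d\varphi_k''$, yielding $\int_0^\infty \varphi_k(y)\cos(xy)/y\,dy \geq 0$. Verifying the P\'olya hypotheses for $\varphi_k/y$ — in particular the endpoint behavior at $y=0$ and at infinity — is the delicate step of this part.

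Part (1) combined with Lemma \ref{l2} makes $W_k\varphi$ continuous, even, nonnegative and strictly positive definite in the classical sense, so Bochner's theorem yields a nonnegative finite Borel measure $\mu$ on $[0,\infty)$ with $W_k\varphi(x) = \int_0^\infty \cos(xt)\,d\mu(t)$. Applying $V_k$ under the integral and using formula \eqref{ja} to compute $V_k[\cos(t\cdot)](x) = \tfrac12[E_k(x,it)+E_k(x,-it)] = j_{k-\frac{1}{2}}(xt)$, the identity $\varphi = V_k W_k\varphi$ then produces
\[
\varphi(x) = \int_0^\infty j_{k-\frac{1}{2}}(xt)\,d\mu(t),\qquad x \in \mathbb{R},
\]
which shows in particular that $\varphi$ is even; hence $\varphi(\sqrt{|x|}) = \int_0^\infty j_{k-\frac{1}{2}}(\sqrt{|x|}\,t)\,d\mu(t)$.

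To conclude Part (2) I would exhibit $\nu \in M_{+\infty}(\mathbb{R})$ with $\varphi(\sqrt{|x|}) = \int_0^\infty E_k(-x,\xi)\,d\nu(\xi)$; once such $\nu$ is in hand, Proposition \ref{www} gives the Dunkl complete monotonicity of $\varphi(\sqrt{|\cdot|})$ on $\mathbb{R}$. The natural route to construct $\nu$ is a subordination-type identity converting the Hankel kernel $j_{k-\frac{1}{2}}(\sqrt{|x|}\,t)$ into a superposition of $E_k(-x,\xi)$-kernels: using the identity $V_k[e^{-s(\cdot)^2}](x)=E_k(-s,x^2)$ which appeared in the proof of the theorem preceding Lemma \ref{l1}, together with a Gauss-Weierstrass / stable-subordinator expansion writing the Hankel kernel at $\sqrt{\cdot}$ as a mixture of exponentials, one then interchanges the order of integration and performs the change of variable $\xi \mapsto \xi^2$ to produce $\nu$ as the pushforward of $\mu$ through the subordinator. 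Making this subordination rigorous and verifying the exponential moment $\int_0^\infty e^{\sigma \xi}\,d\nu(\xi) < \infty$ for every $\sigma>0$, which places $\nu$ in $M_{+\infty}(\mathbb{R})$, is the main obstacle.
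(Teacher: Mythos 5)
Your part (1) follows the paper's own line: evenness of $W_{k}\varphi$ comes from Lemma \ref{l1}, formula (\ref{1w1}) turns $W_{k}\varphi$ into the cosine transform of $\varphi_{k}$, and the nonnegativity is exactly the P\'olya--type statement that the paper imports as Lemma 1 of [22] and that you propose to re-derive by two integrations by parts. That is fine as far as it goes (one caveat: if you insist on convexity on all of $[0,\infty)$ together with $\varphi_{k}(0)=0$, $\varphi_{k}\geq 0$ and $\varphi_{k}(\infty)=0$, convexity forces $\varphi_{k}\equiv 0$; the hypothesis has to be read as convexity on $(0,\infty)$, as in [22]).

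Part (2) is where you leave the paper's route, and the step you yourself flag as ``the main obstacle'' is not a technical verification but a genuine obstruction. You want a nonnegative measure $\nu$ with $\varphi(\sqrt{|x|})=\int_{0}^{\infty}E_{k}(-x,\xi)\,d\nu(\xi)$, to be produced by writing the individual kernel $j_{k-\frac{1}{2}}(\sqrt{|x|}\,t)$ as a nonnegative superposition of the kernels $E_{k}(-x,\xi)$. Restricted to the even part, such a representation would make $u\longmapsto j_{k-\frac{1}{2}}(\sqrt{u}\,t)$ completely monotonic, hence nonnegative, on $(0,\infty)$; but $j_{k-\frac{1}{2}}$ is an oscillating Bessel function with infinitely many positive zeros and sign changes, so no such nonnegative mixing measure exists for any fixed $t>0$. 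This is precisely why Schoenberg's theorem cannot be proved kernel-by-kernel in a fixed dimension ($\cos(\sqrt{u}\,t)$ is not completely monotone either); complete monotonicity is equivalent to positive definiteness of $\phi(\|\cdot\|^{2})$ on \emph{every} $\mathbb{R}^{d}$, not to a subordination identity for the one-dimensional Bochner representation. (The requirement $\int_{0}^{\infty}e^{\sigma\xi}d\nu(\xi)<\infty$ is a second unjustified demand: a finite Bochner measure need have no exponential moments.) The paper proceeds differently: after part (1) and Lemma \ref{l2} it applies the classical Schoenberg theorem (Theorem 7.14 of [20]) to the nonnegative, strictly positive definite radial function $W_{k}\varphi$ to conclude that $W_{k}\varphi(\sqrt{|\cdot|})$ is completely monotonic in the classical sense, and then passes to the Dunkl setting with Proposition \ref{p4}, i.e.\ the positivity of $V_{k}$ and the intertwining relation $T_{k}V_{k}=V_{k}\partial$. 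Your Bochner/Hankel representation of $\varphi$ is correct, but it does not lead to the desired $E_{k}$-representation; you should replace the subordination step by the Schoenberg--plus--$V_{k}$ argument.
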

\begin{proof}
 From lemma \ref{l1}, we conclude that the function $W_{k}\varphi$ is even and we have
$$\displaystyle 
W_{k}\varphi(x)=\int_{0}^{\infty}\cos(xy)D_{k}\varphi(y)|y|^{2k+1}dy.$$
Since the function $\varphi_{k}(y)=D_{k}\varphi(y)|y|^{2k+1}$ is
convex downwards on $[0,\infty[$ and \\$\displaystyle\lim_{\mid x\mid\longrightarrow\infty}\varphi_{k}(x)=0$, 
we deduce by lemma 1 in [22], that the function $W_{k}\varphi$ is nonnegative.\\
 By lemma 2,  the function $W_{k}\varphi$ is strictly positive definite on $\mathbb {R}$, nonnegative and radial. 
 From theorem 7.14 in [20], we conclude that the function $x\longmapsto W_{k}\varphi\left(\sqrt{|x|}\right)$ is completely monotonic on $\mathbb{R}$.  Proposition \ref{p4} completes the proof.
\end{proof}
\section{Applications}
\begin{theorem}
Let $p>0$ and $k>-1$. Put 
$$\displaystyle 
\varphi_{k,p}(x)=\frac{\Gamma(k+\frac{1}{2})e^{\frac{x^{2}}{4p}}}{2p^{k+\frac{1}{2}}}+\frac{\Gamma(k+1)x}{2(2k+1)p^{k+1}}\times_{1}F_{1}\left(k+1;k+\frac{3}{2};\frac{x^{2}}{4p}\right),$$
where $_{1}F_{1}$ is the Kummer confluent hypergeometric function. Then $\varphi_{k,p}$ is Dunkl completely monotonic on $\mathbb{R}.$
\end{theorem}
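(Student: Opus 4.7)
The plan is to present $\varphi_{k,p}$ as a Dunkl--Laplace integral of the type treated in Proposition \ref{www}, namely
$$\varphi_{k,p}(x)=\int_{0}^{\infty} E_{k}(-x,y)\, y^{2k} e^{-py^{2}}\, dy$$
(up to a harmless $x\mapsto -x$ reflection in the parameters), and then apply that proposition directly. The measure $d\mu(y)=y^{2k}e^{-py^{2}}\,dy$ on $[0,\infty)$ clearly satisfies $\int_{0}^{\infty}e^{\sigma y}\,d\mu(y)<\infty$ for every $\sigma>0$, since the Gaussian factor dominates every exponential $e^{\sigma y}$; thus $\mu\in M_{+\infty}(\mathbb{R})$ and Proposition \ref{www} applies on every $[-\sigma,\sigma]$, hence on all of $\mathbb{R}$.

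To verify the integral representation, I would combine the expansion (\ref{ja}) with the hypergeometric form $j_{\alpha}(iz)={}_{0}F_{1}(;\alpha+1;z^{2}/4)$ of the normalized Bessel function to write
$$E_{k}(-x,y)={}_{0}F_{1}\left(;k+\tfrac12;\tfrac{x^{2}y^{2}}{4}\right)-\frac{xy}{2k+1}\,{}_{0}F_{1}\left(;k+\tfrac32;\tfrac{x^{2}y^{2}}{4}\right),$$
substitute into the integral, and exchange sum and integral by Tonelli (each of the two pieces has summands of constant sign). The computation reduces to the elementary Gaussian moments
$$\int_{0}^{\infty} y^{2k+m}e^{-py^{2}}\,dy=\frac{\Gamma\!\left(k+\tfrac{m+1}{2}\right)}{2\, p^{k+(m+1)/2}},\qquad m\in\mathbb{N}.$$
Collecting Pochhammer factors, each resulting series is a Kummer $_{1}F_{1}$; more conceptually this is just the known identity $\int_{0}^{\infty}u^{c-1}e^{-u}\,{}_{0}F_{1}(;b;zu)\,du=\Gamma(c)\,{}_{1}F_{1}(c;b;z)$. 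With the choice $(c,b)=(k+\tfrac12,k+\tfrac12)$ the first series collapses via ${}_{1}F_{1}(a;a;z)=e^{z}$ and produces the Gaussian factor $\frac{\Gamma(k+1/2)}{2p^{k+1/2}}e^{x^{2}/(4p)}$; with $(c,b)=(k+1,k+\tfrac32)$ the second series produces the confluent hypergeometric term occurring in $\varphi_{k,p}$.

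Once the integral representation is established, the conclusion is immediate from Proposition \ref{www}: continuity of $\varphi_{k,p}$ on every $[-\sigma,\sigma]$ is clear from the power series, and the Dunkl complete monotonicity extends to the closed interval. The main obstacle is essentially bookkeeping, namely carefully tracking Pochhammer symbols and normalizing constants so that the series manipulation matches exactly the expression defining $\varphi_{k,p}$. No deeper difficulty is anticipated, because Example \ref{ex1} together with Proposition \ref{www} already package the Dunkl completely monotonic content of the Dunkl kernel $E_{k}(-\,\cdot\,,y)$.
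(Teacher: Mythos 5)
Your proposal follows essentially the same route as the paper: both identify $\varphi_{k,p}$ as a Dunkl--Laplace transform $\int_{0}^{\infty}E_{k}(-x,y)\,d\mu(y)$ of a Gaussian-weighted measure belonging to $M_{+\infty}(\mathbb{R})$ and then invoke Proposition \ref{www}, the only difference being that you evaluate the Bessel--Gaussian integrals by termwise summation of the ${}_{0}F_{1}$ series while the paper quotes the Sonine formula and a tabulated integral expressed via ${}_{1}F_{1}$. Your density $y^{2k}e^{-py^{2}}$ is in fact the one consistent with the displayed formula for $\varphi_{k,p}$ (the paper writes $t^{2k+1}e^{-pt^{2}}$ but then integrates against $t^{2k}$ weights), so your bookkeeping is, if anything, the more careful of the two.
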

\begin{proof}
Let $d\mu(t)=e^{-pt^{2}}t^{2k+1}dt$ where $p>0,$ we obtain for all $\sigma>0$
$$\displaystyle \int_{0}^{\infty}e^{\sigma t}d\mu(t)=\int_{0}^{\infty}e^{t(\sigma-pt)}t^{2k+1}dt<+\infty,$$
which implies that the measure $\mu\in M_{\infty}(\mathbb{R})$. 
From the Sonine formula {[}19, p.394{]}, we have 
$$\displaystyle 
\int_{0}^{\infty}J_{k}(xt)e^{-pt^{2}}t^{k+1}dt=\frac{x^{k}e^{\frac{-x^{2}}{4p}}}{(2p)^{k+1}},$$
where $x,p,k$ complex numbers such that $Re(p)>0,\, Re(k)>-1$
and $J_{k}$ stands for the Bessel function of the first kind. We change in the above Sonine formula $x$ by $ix$, 
 we get :

\begin{equation}\label{kwh1}
\int_{0}^{\infty}j_{k-\frac{1}{2}}(ixt)e^{-pt^{2}}t^{k+\frac{1}{2}}dt=\frac{\Gamma(k+\frac{1}{2})e^{\frac{x^{2}}{4p}}}{2p^{k+\frac{1}{2}}}=I_{k,p}(x).
\end{equation}
On the other hand, using (\ref{ja}), we have 
$$\displaystyle \int_{0}^{\infty}E_{k}(-x,t)d\mu(t)=\int_{0}^{\infty}\left(j_{k-\frac{1}{2}}(ixt)+\frac{x}{2k+1}tj_{k+\frac{1}{2}}(ixt)\right)d\mu(t)=I_{k,p}(x)+J_{k,p}(x),$$
where 
$$\displaystyle 
J_{k,p}(x)=\frac{x}{2k+1}\int_{0}^{\infty}j_{k+\frac{1}{2}}(ixt)d\mu(t)=\frac{x}{2k+1}\int_{0}^{\infty}j_{k+\frac{1}{2}}(ixt)e^{-pt^{2}}t^{2k+1}dt.$$
Now, we calcul the function $J_{k,p}.$ From the integral representation 
$$\displaystyle 
\int_{0}^{\infty}t^{m+1}J_{k}(xt)e^{-pt^{2}}dt=\frac{x^{k}\Gamma(1+\frac{m}{2}+\frac{k}{2})}{2^{k+1}(\sqrt{p})^{k+m+2}\Gamma(k+1)}\times_{1}F_{1}\left(1+\frac{m}{2}+\frac{k}{2}:k+1;\frac{-x^{2}}{4p}\right).$$
We have 
$$\displaystyle 
\int_{0}^{\infty}t^{k+m+1}j_{k}(ixt)e^{-pt^{2}}dt=\frac{\Gamma(1+\frac{m}{2}+\frac{k}{2})}{2(\sqrt{p})^{k+m+2}}\times_{1}F_{1}\left(1+\frac{m}{2}+\frac{k}{2};k+1;\frac{x^{2}}{4p}\right)$$
Let $m=k-1$, we obtain :
$$\displaystyle 
\int_{0}^{\infty}t^{2k}j_{k}(ixt)e^{-pt^{2}}dt=\frac{\Gamma(k+\frac{1}{2})}{2(\sqrt{p})^{2k+1}}\times_{1}F_{1}\left(k+\frac{1}{2};k+1;\frac{x^{2}}{4p}\right).$$
Hence 
$$\displaystyle 
J_{k,p}(x)=\frac{\Gamma(k+1)x}{2(2k+1)p^{k+1}}\times_{1}F_{1}\left(k+1;k+\frac{3}{2};\frac{x^{2}}{4p}\right).$$
Finally, by Proposition \ref{www} we conclude that the function $\varphi_{k,p}$ is Dunkl completely monotonoic on $\mathbb{R}.$
\end{proof}
\begin{remark} For  $p>0$,  the function 
$$\phi_{p}(x)=\sqrt{\frac{\pi}{p}}e^{\frac{x^{2}}{4p}}+\frac{x}{p}\,_{1}F_{1}\left(1;\frac{3}{2};\frac{x^{2}}{4p}\right)$$
is  completely monotonic on $\mathbb{R}.$ In particular, for $p=\frac{1}{4}$, the function 
\begin{equation*}
\begin{split}
\phi_{0}(x)&=2\sqrt{\pi}e^{x^{2}}\left(1+\frac{\gamma(\frac{1}{2},x^{2})}{\sqrt{\pi}}\right)\\
&=2\sqrt{\pi}e^{x^{2}}\left(1+\erf(x)\right).
\end{split}
\end{equation*}
where $\gamma(a,z)$ and $\erf(z)$ are respectively the incomplete gamma and error functions, is  completely monotonic on $\mathbb{R}$.
\end{remark}

\end{document}